\documentclass{amsart}

\usepackage{latexsym,amssymb}
\usepackage{amsmath,amsthm}

\usepackage{amsfonts}
\usepackage{graphicx}

\def\glim{\mathop{\text{\normalfont $\Gamma-$lim}}}
\def\supp{\mathop{\text{\normalfont supp}}}
\def\dist{\mathop{\text{\normalfont dist}}}

\newcommand{\R}{\mathbb{R}}

\newcommand{\N}{\mathbb{N}}

\newcommand{\ve}{\varepsilon}
\newcommand{\ito}{\infty}
\newcommand{\T}{\mathcal{T}}

\newtheorem{thm}{Theorem}[section]

\newtheorem{lemp}[thm]{Lemma}
\newtheorem{prop}[thm]{Proposition}
	
\theoremstyle{definition}
\newtheorem{de}[thm]{Definition}
\theoremstyle{remark}
\newtheorem{rk}[thm]{Remark}

\numberwithin{equation}{section}

\parskip 5pt

\begin{document}

\title[Shape optimization]{A shape optimization problem for Steklov eigenvalues in oscillating domains}

\author[J. Fern\'andez Bonder and J. F. Spedaletti]{Juli\'an Fern\'andez Bonder and Juan F. Spedaletti}

\address[J. Fern\'andez Bonder]{Departamento de Matem\'atica FCEN - Universidad de Buenos Aires and IMAS - CONICET. Ciudad Universitaria, Pabell\'on I (1428)
Av. Cantilo s/n. Buenos Aires, Argentina.}

\address[J. F. Spedaletti]{Departamento de Matem\'atica, Universidad Nacional de San Luis and IMASL - CONICET.}

\email[J. Fernandez Bonder]{jfbonder@dm.uba.ar}
\urladdr[J. Fernandez Bonder]{http://mate.dm.uba.ar/~jfbonder}

\email[J. F. Spedaletti]{jfspedaletti@unsl.edu.ar}

\subjclass[2010]{35P30, 35J92, 49R05}

\keywords{Shape optimization, Steklov eigenvalues, Gamma convergence, Oscillating domains}

\begin{abstract}
In this paper we study the asymptotic behavior of some optimal design problems related to nonlinear Steklov eigenvalues, under irregular (but diffeomorphic) perturbations of the domain.
\end{abstract}

\maketitle

\section{Introduction}

Let $\Omega\subset \mathbb R^n$ be a bounded domain with regular boundary, let $\alpha\in (0,1)$ and $\Gamma\subset \partial\Omega$ be a measurable set (a window) such that $|\Gamma|_{n-1} = \alpha |\Omega|_{n-1}$, where $|\cdot|_d$ refers to the $d-$dimensional Hausdorff measure. The optimal Sobolev trace constant is defined as
$$
\lambda(\Gamma) := \inf_{v\in W^{1,p}_\Gamma(\Omega)} \frac{\int_\Omega |\nabla v|^p + |v|^p\, dx}{\int_{\partial\Omega} |v|^p\, dS},
$$
where $W^{1,p}_\Gamma(\Omega)$ is the set of functions $v\in W^{1,p}(\Omega)$ such that $v|_{\Gamma}=0$.

In \cite{Bonder-Neves-Delpezzo}, the authors study the following problem: minimize $\lambda(\Gamma)$ among all admissible windows, i.e.
\begin{equation}\label{lambda.alpha}
\lambda(\alpha) = \inf_{\Gamma\in \Sigma_\alpha} \lambda(\Gamma), 
\end{equation}
where $\displaystyle \Sigma_{\alpha} = \{\Gamma\subset \partial\Omega\colon \text{are measurable and } \displaystyle |\Gamma|_{n-1} = \alpha |\partial\Omega|_{n-1}\}.$

In the above mentioned work the authors show the existence of an {\em optimal window} $\Gamma_0$, i.e. some $\Gamma_0\in \Sigma_\alpha$ such that $\lambda(\alpha)=\lambda(\Gamma_0)$. Moreover it is shown that if $u_0$ is the eigenfunction associated to $\lambda(\Gamma_0)$ then $\{u_0=0\}\cap\partial\Omega = \Gamma_0$.

We refer the interested reader to \cite{Bonder-Neves-Delpezzo} and references therein for a motivation and history of this problem.

In this work we study the behavior of this optimal windows when the domain $\Omega$ is perturbed periodically by a sequence of domains $\Omega_\ve$ and try to determine whether they approximate $\Gamma_0$ in some reasonable sense.

Let us denote by $\lambda_\ve(\alpha)$ the constant \eqref{lambda.alpha} in the domain $\Omega_\ve$. We find that the behavior of the constants $\lambda_\ve(\alpha)$ and of their corresponding optimal windows $\Gamma_\ve$ depend strongly on the amplitude of the oscillations. We distinguish three cases: i.- Subcritical case: In this case the oscillations are very big and the trace constant converges to zero. ii.- Supercritical case: In this case the oscillations are very small and there are convergence to the unperturbed problem. iii.- Critical case: In this case the amplitude compensates with the oscillations and this is reflected in the appearance of a weight term. 

The results presented here are new even in the linear eigenvalue problem that corresponds to $p=2$.

\subsection{$\ve$-Oscilations}

In \cite{Bonder-Neves-Delpezzo}, the authors studied the asymptotic behavior of $\lambda_\ve(\alpha)$ where the domains $\Omega_\ve$ are {\em regular} perturbations of the original domain $\Omega$. To be precise, the authors apply the so-called Hadamard variations of domains method and are able to compute the {\em shape derivative} of $\lambda(\alpha)$ with respect to these deformations. See \cite{Bonder-Neves-Delpezzo} for the details.

Here we follow a different path. Instead of considering regular perturbations we analyze the case of periodic oscillatory deformations where the amplitude of these oscillations converge to zero, and the period of these oscillations also converge to zero.

We start by describing the type of perturbations that we are to consider. Let $\Omega\subset \R^{n}$ be bounded. Assume that the boundary is regular ($C^1$ will be enough for most of our arguments). Take $U\subset \R^n$ and $\Phi\colon U'\subset \R^{n-1}\to \R$, where $U'$ is open and connected, such that
\begin{align*}
\partial\Omega\cap U &= \{(x_1,x')\in \R^n \colon  x'\in U',\  x_1=\Phi(x')\},\\
\Omega\cap U &= \{(x_1,x')\in \R^n \colon  x'\in U',\  x_1<\Phi(x')\}.
\end{align*}

Let $f\colon \R^{n-1}\to \R$ be a $C^1$ function, periodic with period $Y'=[0,1]^{n-1}$. 

With all this notation we can now define our perturbed domains $\Omega_\ve\subset \R^n$ as
\begin{equation}\label{Omega.epsilon}
\Omega_{\ve}\cap U=\{(x_{1},x')\in U \colon x'\in U',\ x_{1}<\Phi(x')+\ve^{a}f(\tfrac{x'}{\ve})\}
\end{equation}
and therefore, 
$$
\partial\Omega_{\ve}\cap U = \{(x_{1},x')\in \R^{N} \colon x'\in U',\ x_{1}=\Phi(x')+\ve^{a}f(\tfrac{x'}{\ve}) \}.
$$

By the results of \cite{Bonder-Neves-Delpezzo}, for every constant $\lambda_\ve(\alpha)$, there exists an optimal window $\Gamma_\ve$ and the corresponding eigenfunction $u_\ve\in W^{1,p}(\Omega_\ve)$ verifies that $\Gamma_\ve = \{u_\ve=0\}\cap \partial\Omega_\ve$. Our goal is to study the behavior of these optimal windows $\Gamma_\ve$, their eigenfunctions $u_\ve$ and of the constants $\lambda(\Gamma_\ve)= \lambda_\ve(\alpha)$ when $\ve\downarrow 0$.

Observe that these domains $\Omega_\ve$ converge to $\Omega$ in practically any reasonable notion of set convergence in $\R^n$ (for instance in the Hausdorff complementary topology, the $L^1$ norm of the characteristic functions, etc.).

As we mentioned in the introduction, the behavior strongly depends on the amplitude of the oscillations measured in terms of the parameter $a>0$.

Three cases appear: 
\begin{itemize}
\item The {\em subcritical} case, that corresponds to large oscillations with respect to the period ($a<1$).

\item The {\em supercritical} case, that corresponds to small oscillations with respect to the period ($a>1$).

\item The {\em critical} case, that corresponds to the case where amplitude and oscillations are of the same order ($a=1$).
\end{itemize}

In the subcritical case, being the oscillations so big, the problem degenerates and the immersion is lost in the limit. This is a {\em fattening} phenomena of the boundary and it is reflected in the fact that the constants $\lambda(\Gamma_\ve)$ converge to zero.

In the supercritical case, the oscillations are too small. Then, for small values of $\ve$ the oscillations become imperceptible and that is reflected in the fact that the problem converges to the unperturbed one when $\ve\downarrow 0$.

Finally, the critical case is the most interesting. In this case, the oscillations and the periods are balanced and an {\em homogeneization} phenomena appears at the boundary. This homogenization is reflected in the appearance of a {\em strange term} at the boundary for the limit problem in the spirit of Cioranescu-Murat \cite{Murat}. This phenomena have been observed in the work \cite{Bonder-Orive-Rossi} where the pure eigenvalue problem is addressed.

Taking into account the above perturbation of the domain $\Omega$ we get the result.

\begin{thm}\label{teo.main}
Let $\Omega\subset \R^n$ be an open, bounded set and assume that $\partial\Omega$ is of class $C^1$. Let $\{\Omega_\ve\}_{\ve>0}$ be the family of perturbed domains as described in \eqref{Omega.epsilon}. Let $\lambda_{\ve}(\alpha)$ ($0<\alpha<1$) be the best Sobolev trace constant on $\Omega_{\ve}$ given by \eqref{lambda.alpha} in the domain $\Omega_\ve$. 

Then the following statements hold true:
\begin{enumerate}
\item {\em (Subcritical case)} If $a<1$ then $\lim_{\ve\to 0}\lambda_{\ve}(\alpha)=0$, moreover, we have the following asymptotic behavior
\begin{equation}\label{subcritico}
\lambda_{\ve}(\alpha)\leq C \ve^{1-a}
\end{equation}
where the constant $C$ depends only on the function $f$ used in the perturbation.

\item {\em (Supercritical case)} If $a>1$ then $\lim_{\ve\to 0}\lambda_{\ve}(\alpha)= \lambda(\alpha)$.

\item {\em (Critical case)} If $a=1$ then $\lim_{\ve\to 0}\lambda_{\ve}(\alpha) = \lambda^{*}(\alpha)$, where $\lambda^{*}(\alpha)$ is defined as
\begin{equation}\label{lambdaestrella}
\lambda^{*}(\alpha) := \inf \left \{\frac{\int_{\Omega}|\nabla u|^{p}+|u|^{p}dx}{\int_{\partial \Omega} |u|^{p}\, d\mu^*} \colon u\in W^{1,p}(\Omega),\ \mu^*(\{u=0\}\cap \partial\Omega) \ge \alpha \mu^*(\partial\Omega)\right\},
\end{equation}
where the measure $\mu^*$ is given by $d\mu^* = m dS$ and the weight $m$ is defined by
\begin{equation}\label{m}
m(x)=\frac{\int_{Y}\sqrt{1+|\nabla \Phi(x')+\nabla f(y)|^{2}}dy}{\sqrt{1+|\nabla \Phi(x')|^{2}}}.
\end{equation}
\end{enumerate}
\end{thm}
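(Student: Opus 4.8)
The plan is to reduce all three cases to the analysis of a single oscillating weight on the boundary, obtained by pulling back the perturbed problem to the fixed domain $\Omega$ via a boundary-adapted diffeomorphism. Concretely, I would fix a cutoff $\eta\in C_c^\infty(U)$ equal to $1$ in a neighborhood of $\partial\Omega\cap U$ and set $T_\ve(x_1,x')=(x_1+\ve^a f(x'/\ve)\eta(x_1,x'),x')$, so that $T_\ve$ is a $C^1$-diffeomorphism of $\Omega$ onto $\Omega_\ve$ carrying $\partial\Omega$ onto $\partial\Omega_\ve$. Writing $u=v\circ T_\ve$ for $v\in W^{1,p}(\Omega_\ve)$ and changing variables, the volume terms become $\int_\Omega |(DT_\ve)^{-T}\nabla u|^p\,|\det DT_\ve|\,dx$ and $\int_\Omega |u|^p\,|\det DT_\ve|\,dx$, while the trace term becomes $\int_{\partial\Omega}|u|^p\,w_\ve\,dS$ with the \emph{oscillating surface weight}
$$
w_\ve(x')=\frac{\sqrt{1+|\nabla\Phi(x')+\ve^{a-1}\nabla f(x'/\ve)|^2}}{\sqrt{1+|\nabla\Phi(x')|^2}}.
$$
The three regimes are then dictated by $\ve^{a-1}$: for $a>1$ it tends to $0$ and $w_\ve\to 1$ uniformly; for $a=1$ it is $O(1)$ and $w_\ve$ oscillates, converging weakly-$*$ to its $Y$-average, which is exactly the weight $m$ of \eqref{m}; for $a<1$ it blows up and $w_\ve\gtrsim\ve^{a-1}|\nabla f(x'/\ve)|\to+\infty$.

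For the \emph{subcritical case} $(a<1)$ I would obtain the bound \eqref{subcritico} by an explicit competitor. Fix a smooth $u$ on $\overline\Omega$ vanishing on a portion $\Gamma_0\subset\partial\Omega$ (chosen away from the oscillating region $U$) of the correct measure fraction, and not identically zero on the oscillating part of the boundary. The numerator of the transported Rayleigh quotient stays bounded, since the Jacobian factors are controlled and the bulk derivatives of $\eta$ are harmless; the denominator $\int_{\partial\Omega}|u|^p w_\ve\,dS$ is bounded below by a constant times $\ve^{a-1}$ because $w_\ve\gtrsim\ve^{a-1}|\nabla f(x'/\ve)|$ on the support of $f$. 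After correcting the window $\Gamma_\ve:=T_\ve(\Gamma_0)$ to the exact fraction $\alpha$ (a negligible perturbation), this gives $\lambda_\ve(\alpha)\le\lambda(\Gamma_\ve)\le C\ve^{1-a}$ with $C$ depending only on $f$.

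For the \emph{supercritical} $(a>1)$ and \emph{critical} $(a=1)$ cases I would prove convergence of the infima by a two-sided ($\Gamma$-convergence type) argument on the transported functionals. For the $\limsup$ inequality I take the optimizer $u_0$ of the limit problem (the unperturbed $\lambda(\alpha)$ when $a>1$, or $\lambda^*(\alpha)$ when $a=1$), use it as a competitor after transport, and pass to the limit: in the volume terms $DT_\ve\to I$ for $a>1$, or its oscillations average out for $a=1$, so both converge to their unperturbed values; in the trace term, $\int_{\partial\Omega}|u_0|^p w_\ve\,dS\to\int_{\partial\Omega}|u_0|^p\,dS$ when $a>1$, and $\to\int_{\partial\Omega}|u_0|^p m\,dS=\int_{\partial\Omega}|u_0|^p\,d\mu^*$ when $a=1$, by the weak-$*$ convergence of $w_\ve$ to $m$. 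For the $\liminf$ inequality I take quasi-minimizers $v_\ve$ on $\Omega_\ve$, pull them back to $u_\ve=v_\ve\circ T_\ve$ on $\Omega$, obtain a uniform $W^{1,p}(\Omega)$ bound from the boundedness of $\lambda_\ve(\alpha)$, and extract $u_\ve\rightharpoonup u$. Weak lower semicontinuity handles the numerator, while the weak-$*$ convergence $w_\ve\,dS\rightharpoonup dS$ (resp.\ $\rightharpoonup d\mu^*$) together with the compactness of the trace operator handles the denominator.

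The main obstacle will be the treatment of the \emph{window constraint} in the $\liminf$ inequality of the critical case. One must show that the measure condition $\mu^*(\{u=0\}\cap\partial\Omega)\ge\alpha\,\mu^*(\partial\Omega)$ survives the limit: the optimal windows $\Gamma_\ve=\{v_\ve=0\}\cap\partial\Omega_\ve$ pull back to $\{u_\ve=0\}\cap\partial\Omega$, and I must control the weighted measure of these zero sets as $\ve\to 0$. This requires combining the strong $L^p(\partial\Omega)$ convergence of traces (via the compact trace embedding) with the weak-$*$ convergence of the oscillating measure $w_\ve\,dS$, so that mass can neither concentrate nor escape in a way that would violate the constraint in the limit. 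Once the constraint is shown to be closed under this joint convergence, matching the two inequalities yields $\lim_{\ve\to 0}\lambda_\ve(\alpha)=\lambda^*(\alpha)$.
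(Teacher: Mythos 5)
Your overall architecture (pull back to $\Omega$ via a boundary diffeomorphism, identify the oscillating surface weight, explicit competitor for $a<1$, two-sided $\Gamma$-type argument for $a\ge 1$) is the same as the paper's, and your subcritical argument matches Section \ref{subcritical.sec} essentially verbatim. But there is a genuine gap in the critical case $a=1$, caused by your choice of a \emph{fixed} cutoff $\eta\in C_c^\infty(U)$ equal to $1$ on a neighborhood of $\partial\Omega\cap U$. With that choice, $DT_\ve$ contains the term $\ve^{a-1}\eta(x)\,\nabla f(x'/\ve)$, which for $a=1$ does \emph{not} vanish on the fixed collar where $\eta\equiv 1$; it merely oscillates. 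Consequently the transported volume term $\int_\Omega |(DT_\ve)^{-T}\nabla u|^p\,|\det DT_\ve|\,dx$ does not converge to $\int_\Omega|\nabla u|^p\,dx$: even for a fixed smooth $u$ it converges to a homogenized bulk functional involving the $Y$-average of $|(I-B(y))^{-T}\nabla u|^p$, which differs from $|\nabla u|^p$. Your assertion that the oscillations ``average out \dots\ to their unperturbed values'' is false as stated, and for the liminf inequality plain weak lower semicontinuity cannot handle an oscillating matrix acting on a weakly convergent gradient. The paper avoids this by taking the cutoff $\phi_\ve$ supported in a $\sqrt{\ve}$-neighborhood of $\partial\Omega$ (so the set where $T_\ve\ne id$ has vanishing measure, while $\ve^a\|\nabla\phi_\ve\|_\infty\le C\ve^{a-1/2}\to 0$); then $T_\ve=id$ on every compact $K\subset\Omega$ for small $\ve$, the spurious bulk homogenization disappears, and the liminf can be obtained by localizing to such $K$ (the paper in fact derives a.e.\ convergence of $\nabla v_\ve$ from the Euler--Lagrange equation and monotonicity of the $p$-Laplacian, restricting the liminf inequality to sequences of minimizers).

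Two further points are flagged but not resolved in your proposal. First, the closedness of the constraint under the joint convergence (weak traces plus varying measures $w_\ve\,dS$) is exactly the content of the paper's Lemma \ref{lema.medidas1}, a nontrivial measure-theoretic statement of the form $\limsup_k\mu_k(\{f_k=0\})\le\mu(\{f=0\})$ when both the functions and the measures vary; you correctly identify this as the main obstacle but give no argument. Second, your recovery sequence for the limsup inequality is incomplete: the transported optimizer $u_0$ of the limit problem need not satisfy the $\ve$-level constraint $|\{\cdot=0\}\cap\partial\Omega_\ve|_{n-1}\ge\alpha|\partial\Omega_\ve|_{n-1}$, since a set of $\mu^*$-measure exactly $\alpha\mu^*(\partial\Omega)$ can have $\mu_\ve$-measure strictly below the threshold. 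The paper repairs this with the truncation $\tilde v_k=\max\{v-\tfrac1k,0\}$, which strictly enlarges the zero set before renormalizing (Lemma \ref{lema.mosco}); some such device is needed in your argument as well.
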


Nevertheless, our method is far more general and we are able to treat general perturbations where the periodic perturbation described above is just an (important) example. See Theorem \ref{superycriticogeneral} below. In particular, the perturbations considered here also cover the regular deformations considered in \cite{Bonder-Neves-Delpezzo}.

Moreover, we go further and analyze the behavior of these optimal windows $\Gamma_\ve$ and of their corresponding eigenfunctions $u_\ve$ as $\ve\downarrow 0$. We found that, in the critical and in the subcritical case (an also in the more general framework of Theorem \ref{superycriticogeneral}) these optimal windows converge (in a suitable sense) to an optimal window of the corresponding limit problem and also the convergence of their eigenfunctions to the eigenfunction of the limit problem. See Theorem \ref{teo.ventanas}.

\subsection{Organization of the paper}

After this introduction, the paper is organized as follows. In Section \ref{changes.variables.sec}, we study the qualitative properties of the change of variables that deforms the original domain $\Omega$ into the periodically perturbed one $\Omega_\ve$. In Section \ref{subcritical.sec} we analyze the subcritical perturbation ($a<1$) in Theorem \ref{teo.main}. In Section \ref{critical.sec} we prove one of the main theorems of the paper (Theorem \ref{superycriticogeneral}) that implies, for instance, the critical ($a=1$) and the supercritical ($a>1$) cases in Theorem \ref{teo.main} and, moreover, the convergence of the corresponding eigenfunctions to the eigenfunction of the limit problem. Finally, in Section \ref{window.sec} we prove our second main theorem (Theorem \ref{teo.ventanas}) on the convergence of optimal windows.

\section{Estimates for the changes of variables}\label{changes.variables.sec}

In the analysis of the asymptotic behavior of the problem when $\ve\downarrow 0$, it is of fundamental importance to understand the asymptotic behavior of the changes of variables that take the perturbed domains $\Omega_\ve$ into $\Omega$.

Once these asymptotic behaviors are studied, the analysis is independent of the particular form of the change of variables and only depends on this asymptotic behavior.

Hence, given $\ve>0$ we define the transformation $T_\ve\colon \Omega_\ve\to \Omega$ as
\begin{equation}\label{cambiovariablesupercritico}
(y_1, y') = T_\ve(x_1, x') = (x_1 - \ve^{a}f(\tfrac{x'}{\ve})\phi_\ve(x), x'),
\end{equation}
where, as usual, $x'=(x_2,\dots,x_n)$ and $\phi_\ve\in C^\ito_c(\R^n)$ is supported on $B_{\sqrt{\ve}}(\partial\Omega) = \bigcup_{x\in \partial\Omega} B_{\sqrt{\ve}}(x)$, $\phi_\ve\equiv 1$ in $\partial\Omega$, $0\le\phi_\ve\le 1$, $|\nabla \phi_\ve|\le C\ve^{-\frac12}$.

We now compute the differential of $T_\ve$, $DT_\ve$.
$$
DT_\ve = \begin{pmatrix}
1-\ve^a f \partial_1 \phi_\ve & - \ve^{a-1}\partial_2 f \phi_\ve - \ve^a f \partial_2 \phi_\ve& \cdots & -\ve^{a-1} \partial_n f\phi_\ve - \ve^a f \partial_n\phi_\ve\\
0&  &  & \\
\vdots &  & I_{n-1 \times n-1} &  \\
0&  &  
\end{pmatrix}.
$$
Observe that
$$
DT_\ve(x) = I_{n\times n} -\ve^a f(\tfrac{x'}{\ve}) A_\ve(x) - \ve^{a-1} \phi_\ve(x) B(\tfrac{x'}{\ve}),
$$
where
$$
A_\ve(x) := \begin{pmatrix}
\nabla \phi_\ve(x)\\
0\\
\vdots\\
0
\end{pmatrix}, \qquad 
B(x') := \begin{pmatrix}
0 & \nabla f(x')\\
0 & 0\\
\vdots & \vdots\\
0 & 0
\end{pmatrix}.
$$

Finally, since $\|f\|_\infty<\infty$ and $\|\nabla f\|_\ito<\ito$ we have that
$$
\|B\|_\ito <\ito.
$$

Moreover, since $\|\nabla \phi_\ve\|_\ito\le C \ve^{-\frac12}$, we get
$$
\|A_\ve\|_\ito\leq C \ve^{-\frac12}\chi_{_{\supp(\phi_\ve)}}
$$  
and therefore we obtain that, calling $f_\ve(x') = f(\tfrac{x'}{\ve})$, 
$$
\|\ve^a f_\ve A_\ve\|_\ito\leq C \ve^{a-\frac12}\chi_{_{\supp(\phi_\ve)}}.
$$
On the other hand, calling $B_\ve(x') = B(\tfrac{x'}{\ve})$, 
$$
\|\ve^{a-1} \phi_\ve B_\ve\|_\ito \leq C \ve ^{a-1}\chi_{_{\supp(\phi_\ve)}}.
$$

Observe that when $a\ge 1$, we have that given $K\subset \Omega$ compact, $T_\ve = id_{\R^n}$ on $K$ for $\ve>0$ small enough. In particular
$$
DT_\ve = I_{n\times n} \quad \text{and} \quad JT_\ve = 1
$$
on $K$ for $\ve>0$ small, where $J T_\ve = |\det(DT_\ve)|$ is the Jacobian of $T_\ve$.

Finally, in the case $a>1$, $T_\ve\to id_{\R^n}$ in $C^1$ norm and, as a consequence, we get
$$
DT_\ve\rightrightarrows I_{n\times n},\quad JT_\ve\rightrightarrows 1\quad \text{and}\quad J_\tau T_\ve\rightrightarrows 1,
$$
where $J_\tau T_\ve=|DT_\ve^{-1} \mathbf{n}| JT_\ve$ is the tangential Jacobian of $T_\ve$ and $\mathbf{n}$ is the outer unit normal vector of $\Omega$. See \cite{Henrot} for more details on the tangential Jacobian.

We need now to study the asymptotic behavior of the tangential Jacobian in the case $a=1$. In this case, for $x\in \partial \Omega$ taking into account that $\phi_\ve =1$ on $\partial \Omega$ we get the following expression for the differential
$$
DT_\ve(x) = I_{n\times n}  -  B(\tfrac{x'}{\ve}) + O(\ve^{\frac12}).
$$
The following lemma gives the precise asymptotic behavior of the tangential Jacobian in this case.
\begin{lemp}\label{Jepsiloncriticodebil*m}
Given $g\in L^1(\partial \Omega)$ we have
$$
\int_{\partial\Omega} g J_\tau T_\ve^{-1} \,dS\to \int_{\partial\Omega} g m\,dS,\text{ si }\ve\to 0.
$$
That is $J_\tau T_\ve^{-1}\stackrel{*}{\rightharpoonup} m$ weakly-* in $L^\infty(\partial\Omega)$, where $m$ is the function defined by \eqref{m}.
\end{lemp}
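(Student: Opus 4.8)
The plan is to recognize this as a classic periodic homogenization limit, where the rapidly oscillating quantity $J_\tau T_\ve^{-1}(x)$ depends on $x$ through the fast variable $x'/\ve$, and to show that its weak-$*$ limit is the cell-average of the periodic profile. The starting point is the expression $DT_\ve(x) = I_{n\times n} - B(\tfrac{x'}{\ve}) + O(\ve^{1/2})$ valid on $\partial\Omega$, which I obtained above. First I would compute the tangential Jacobian $J_\tau T_\ve^{-1}$ explicitly in terms of the matrix $I_{n\times n} - B(y)$ (with $y = x'/\ve$) and the unit normal $\mathbf n$ of $\partial\Omega$. Since $B(y)$ has the single nonzero row $\nabla f(y)$ in the top position, this is an elementary (if slightly tedious) linear algebra computation: one inverts $I - B(y)$, applies it to $\mathbf n(x)$, takes the norm, and multiplies by the full Jacobian $J T_\ve$. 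The $O(\ve^{1/2})$ correction is uniform, so it contributes a vanishing error in the limit and can be discarded after the main term is identified. The outcome should be that, up to $o(1)$ in $L^\infty$, $J_\tau T_\ve^{-1}(x)$ equals a fixed periodic function $H(x,\tfrac{x'}{\ve})$, where for each fixed $x$ the profile $y\mapsto H(x,y)$ is $Y'$-periodic and $H$ is continuous in $x$.

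The heart of the argument is then the standard Riemann–Lebesgue / mean-value lemma for periodic oscillations: if $h\in L^\infty_{\mathrm{per}}(Y')$ with $Y'=[0,1]^{n-1}$, then $h(\tfrac{\cdot}{\ve})\stackrel{*}{\rightharpoonup} \fint_{Y'} h(y)\,dy$ weakly-$*$ in $L^\infty$. I would first establish this against the test function $g$ by covering $\partial\Omega$ (locally flattened via the graph chart $\Phi$ over $U'$) by $\ve$-periodicity cells, approximating $g$ by a function constant on each cell, and using periodicity to replace the cell integral by the cell average; a density argument in $L^1(\partial\Omega)$ then upgrades from simple functions to general $g\in L^1$, using the uniform bound $\|J_\tau T_\ve^{-1}\|_\infty \le C$ that follows from $\|B\|_\infty<\infty$. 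The only subtlety is that we integrate over the curved surface $\partial\Omega$ with its surface measure $dS$, not over flat space; but pulling back through the chart $\Phi$ turns $dS$ into $\sqrt{1+|\nabla\Phi(x')|^2}\,dx'$ and the oscillation into a genuine $\ve$-periodic function of $x'$, so the flat-space averaging lemma applies and produces the cell average $\int_{Y'} H(x',y)\,dy$ in the $x'$ variable.

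Identifying this cell average with the explicit weight $m$ in \eqref{m} is the final step. Computing $J_\tau$ of a graph map is the key: for the pullback to the graph of $\Phi$, one finds that the surface-area element of $\partial\Omega$ carries the factor $\sqrt{1+|\nabla\Phi(x')|^2}$, while the oscillating perturbation $\Phi(x')+\ve f(x'/\ve)$ has boundary gradient $\nabla\Phi(x') + \nabla f(x'/\ve)$, whose area element is $\sqrt{1+|\nabla\Phi(x')+\nabla f(y)|^2}$ with $y=x'/\ve$. The tangential Jacobian of the inverse change of variables is precisely the ratio of these two area elements, so its cell average is exactly
\begin{equation*}
\int_{Y'}\frac{\sqrt{1+|\nabla\Phi(x')+\nabla f(y)|^2}}{\sqrt{1+|\nabla\Phi(x')|^2}}\,dy = m(x),
\end{equation*}
matching \eqref{m} (with $Y = Y'$).

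I expect the main obstacle to be the bookkeeping in the linear-algebra computation of $J_\tau T_\ve^{-1}$ and verifying that the $O(\ve^{1/2})$ remainder is controlled \emph{uniformly} on $\partial\Omega$ so that it is harmless under the weak-$*$ test; the abstract averaging lemma itself is routine, and the only conceptual care needed is in correctly transferring the periodic averaging through the curved surface measure. A clean way to avoid heavy computation is to interpret $J_\tau T_\ve^{-1}$ directly as the ratio of the two graph area elements described above, which makes the identification with $m$ transparent and reduces the problem to the scalar periodic averaging lemma.
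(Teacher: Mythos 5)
Your proposal is correct and, in its final form (interpreting $J_\tau T_\ve^{-1}$ as the ratio of the two graph area elements and then applying the periodic mean-value lemma plus a density argument in $L^1(\partial\Omega)$), is essentially the paper's proof; the paper likewise uses the exact surface change-of-variables identity $\int_{\partial\Omega\cap U} g\, J_\tau T_\ve^{-1}\,dS=\int_{\partial\Omega_\ve\cap U}(g\circ T_\ve)\,dS$ in the graph chart, which makes the explicit matrix inversion and the $O(\ve^{1/2})$ bookkeeping you initially describe unnecessary. The only ingredient you leave implicit that the paper states is the uniform convergence $g\circ T_\ve\to g$ needed before averaging, which is immediate from $T_\ve\rightrightarrows id$.
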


\begin{proof}
Let $g\in C(\partial\Omega)$ be arbitrary. We first analyze the convergence locally, so we recall the construction of the perturbations. Then, let $U\subset \R^n$ be as in \eqref{Omega.epsilon} and assume that $\supp(g)\subset U$. We then have that
\begin{equation}\label{convergenciaJepsiloncritico}
\begin{split}
\int_{\partial\Omega \cap U}g J_\tau T_\ve^{-1}\,dS &=\int_{\partial\Omega_\ve \cap U}(g\circ T_\ve)\,dS\\
&= \int_{U'}(g\circ T_\ve)(x') \sqrt{1+|\nabla \Phi(x') +\nabla f(\tfrac{x'}{\ve})|^2}\,dx'. 
\end{split}
\end{equation}
But now
\begin{align*}
\int_{U'} (g\circ T_\ve)(x') \sqrt{1+|\nabla \Phi(x') +\nabla f(\tfrac{x'}{\ve})|^2}\,dx' = \int_{U'} (g\circ T_{\ve})(x') m_\ve(x')\sqrt{1+|\nabla \Phi(x')|^{2}}\,dx', 
\end{align*}
where 
$$
m_\ve(x') := m(x', \tfrac{x'}{\ve}),\quad
m(x',y) :=\frac{\sqrt{1+|\nabla \Phi(x')+\nabla f(y)|^{2}}}{\sqrt{1+|\nabla \Phi(x')|^{2}}}.
$$
Using that $f$ is periodic with period $Y$, it follows that $m(x',y)$ is periodic in $y$ with period $Y$ and hence
$$
m_\ve \stackrel{*}{\rightharpoonup} m \quad \text{weakly-* in } L^\ito(\R^{n-1}). 
$$
See \cite{Lions}.

On the other hand, since $T_\ve\rightrightarrows id_{\R^n}$ if follows that $(g\circ T_\ve)\rightrightarrows g$ uniformly on compact sets, in particular, $(g\circ T_\ve)\to g$ in $L^1(U')$.

Combining all these facts, we arrive at
$$
\int_{U'} (g\circ T_\ve) m_\ve \sqrt{1+|\nabla\Phi|^2}\, dx'\to \int_{U'} g m \sqrt{1+|\nabla\Phi|^2}\, dx' = \int_{\partial \Omega} g m\, dS.
$$

The case where $g\in C(\partial\Omega)$ is arbitrary, follows by a standard arguments using the partition of unity and is omitted.

Finally, if $g\in L^1(\partial\Omega)$ a standard approximation argument gives the desired result.
\end{proof}

Summing up we have proved the following result for the perturbation \eqref{cambiovariablesupercritico}.

\begin{thm}\label{estimacionTve}
Let $\{T_\ve\}_{\ve>0}$ be the transformation given by \eqref{cambiovariablesupercritico}.Then the following estimates hold:
\begin{enumerate}
\item If $a>1$, $T_\ve \to id_{\R^n}$ in $C^1$ norm as $\ve\to 0$. In consequence
$$
T_\ve\rightrightarrows id_{\R^n},\ DT_\ve \rightrightarrows I_{n\times n},\ JT_\ve\rightrightarrows 1 \text{ and } J_\tau T_\ve \rightrightarrows 1.
$$

\item If $a=1$, we have that for any compact set $K\subset \Omega$ there exists $\ve_0>0$ such that
$$
T_\ve|_K = id_K,
$$
for every $0<\ve<\ve_0$. Moreover, 
$$
J_\tau T_\ve^{-1} \stackrel{*}{\rightharpoonup} m \quad\text{weakly-* in } L^\ito(\partial\Omega),
$$
where $m$ is the function given by \eqref{m}.
\end{enumerate}
\end{thm}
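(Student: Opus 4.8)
The plan is to assemble the estimates already derived for $DT_\ve$ above and to invoke Lemma \ref{Jepsiloncriticodebil*m} for the single delicate weak-* statement; the theorem is essentially a consolidation, so the work is mostly bookkeeping.

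For part (1), I would start from the decomposition $DT_\ve = I_{n\times n} - \ve^a f_\ve A_\ve - \ve^{a-1}\phi_\ve B_\ve$ together with the two uniform bounds $\|\ve^a f_\ve A_\ve\|_\infty \le C\ve^{a-\frac12}\chi_{\supp(\phi_\ve)}$ and $\|\ve^{a-1}\phi_\ve B_\ve\|_\infty \le C\ve^{a-1}\chi_{\supp(\phi_\ve)}$. When $a>1$ both exponents $a-\frac12$ and $a-1$ are positive, so both correction terms vanish uniformly as $\ve\to 0$; since the only nonzero component of $T_\ve-id_{\R^n}$ is $-\ve^a f_\ve\phi_\ve$, which is also uniformly small, this yields $T_\ve\to id_{\R^n}$ in $C^1$ norm. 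The Jacobian convergences then follow by continuity: $JT_\ve = |\det DT_\ve|\rightrightarrows|\det I_{n\times n}| = 1$, and since $DT_\ve\rightrightarrows I_{n\times n}$ makes $DT_\ve$ uniformly invertible with $DT_\ve^{-1}\rightrightarrows I_{n\times n}$ for small $\ve$ (e.g. via a Neumann series once $\|DT_\ve-I_{n\times n}\|_\infty<\tfrac12$), we obtain $|DT_\ve^{-1}\mathbf n|\rightrightarrows|\mathbf n| = 1$ and hence $J_\tau T_\ve = |DT_\ve^{-1}\mathbf n|\,JT_\ve\rightrightarrows 1$.

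For the first claim in part (2), I would use only that $\supp(\phi_\ve)\subset B_{\sqrt\ve}(\partial\Omega)$: given a compact $K\subset\Omega$ we have $\dist(K,\partial\Omega)>0$, so choosing $\ve_0$ with $\sqrt{\ve_0}<\dist(K,\partial\Omega)$ forces $\phi_\ve\equiv 0$ on $K$ for every $0<\ve<\ve_0$, whence $T_\ve|_K = id_K$ directly from the definition \eqref{cambiovariablesupercritico}. The second claim in part (2), namely $J_\tau T_\ve^{-1}\stackrel{*}{\rightharpoonup}m$ weakly-* in $L^\infty(\partial\Omega)$, is precisely the content of Lemma \ref{Jepsiloncriticodebil*m}, so no further argument is required.

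The only genuinely substantive ingredient is the weak-* convergence at $a=1$, and that has been isolated in Lemma \ref{Jepsiloncriticodebil*m}; the remaining potential obstacle is purely technical, namely justifying that $DT_\ve^{-1}$ is well defined and uniformly close to $I_{n\times n}$ when forming the tangential Jacobian, which is immediate from $DT_\ve\rightrightarrows I_{n\times n}$.
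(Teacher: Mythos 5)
Your proposal is correct and follows essentially the same route as the paper: the theorem is stated there as a consolidation ("Summing up...") of the decomposition $DT_\ve = I_{n\times n}-\ve^a f_\ve A_\ve-\ve^{a-1}\phi_\ve B_\ve$ with its two uniform bounds, the support observation $\supp(\phi_\ve)\subset B_{\sqrt{\ve}}(\partial\Omega)$ for the identity on compact subsets, and Lemma \ref{Jepsiloncriticodebil*m} for the weak-* convergence when $a=1$. Your added detail on the uniform invertibility of $DT_\ve$ via a Neumann series is a harmless (and correct) elaboration of what the paper leaves implicit in passing from $DT_\ve\rightrightarrows I_{n\times n}$ to $J_\tau T_\ve\rightrightarrows 1$.
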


\section{Subcritical case ($a<1$)}\label{subcritical.sec}
In this section we prove the result in the subcritical case. This is the simplest of the three cases.

\begin{proof}
Let us take $\Gamma_0\subset \partial\Omega$ as the closure of a relative open and connected set such that $|\Gamma_0|_{n-1} > \alpha |\partial\Omega|_{n-1}$. 

Given $\delta>0$, consider the sets $U_\delta = B_\delta(\Gamma_0)$ defined as
$$
U_{\delta} := \{x\in\R^n \colon \dist(x,\Gamma_0)<\delta\}
$$
and take $\Gamma_1\subset \partial\Omega\setminus \overline{U_{2\delta}}$ such that $|\Gamma_1|_{N-1}>0$.

Let now $\phi\in C^1(\bar{\Omega})$ be such that $\phi\equiv 0$ in $U_\delta$, $\phi\equiv 1$ in $\Omega\setminus U_{2\delta}$ and $0\le \phi\le 1$, $|\nabla \phi|\le C\delta^{-1}$ in $U_{2\delta}\setminus U_\delta$.

Observe that if we denote by $\Gamma_{0,\ve}\subset \partial\Omega_\ve$ to the portion of the boundary of $\Omega_\ve$ that comes from perturbing $\Gamma_0$, one has that $\phi\equiv 0$ in $\Gamma_{0,\ve}$ for every $\ve>0$ small. Moreover, is easy to see that $|\Gamma_{0,\ve}|_{n-1}\ge \alpha |\partial\Omega_\ve|_{n-1}$. Then, $\phi$ is admissible in the characterization of $\lambda_\ve(\alpha)$. As a consequence, we get the following estimate:
$$
\lambda_\ve(\alpha)\le \frac{\int_{\Omega_\ve} |\nabla \phi|^p + |\phi|^p\, dx}{\int_{\partial\Omega_\ve} |\phi|^p\, dS}.
$$
This quotient can be easily estimated. In fact,
\begin{equation}\label{cota.phi}
\int_{\Omega_\ve}|\nabla \phi|^p \,dx\le C|\Omega_\ve|_n,\quad \int_{\Omega_\ve}|\phi|^p\,dx\le |\Omega_\ve|_n,
\end{equation}
with $C=C(\delta)$.

On the other hand,
\begin{equation}\label{cota.phi.borde}
\int_{\partial \Omega_\ve} |\phi|^p\, dS \ge \int_{\partial \Omega_\ve\setminus \bar U_{2\delta}} |\phi|^p\, dS = |\partial \Omega_\ve\setminus \bar U_{2\delta}|_{n-1}\ge |\Gamma_{1,\ve}|_{n-1},
\end{equation}
where $\Gamma_{1,\ve}$ stands for the perturbed set obtained from $\Gamma_1\subset \partial\Omega\setminus \bar U_{2\delta}$.

But,
\begin{align*}
|\Gamma_{1,\ve}|_{n-1} &= \int_{U'}\sqrt{1+\left |\nabla \Phi(x')+\ve^{a-1}\nabla f(\tfrac{x'}{\ve})\right |^{2}}\,dx'\\
&= \ve^{a-1} \int_{U'}\sqrt{\ve^{2(1-a)}+\left |\ve^{1-a}\nabla \Phi(x')+\nabla f(\tfrac{x'}{\ve})\right |^{2}}\,dx'.
\end{align*}

Let us now estimate this last integral.
\begin{align*}
\int_{U'}&\sqrt{\ve^{2(1-a)}+\left |\ve^{1-a}\nabla \Phi(x')+\nabla f(\tfrac{x'}{\ve})\right |^{2}}\,dx' \\
&= \int_{U'} \left (\sqrt{\ve^{2(1-a)}+\left |\ve^{1-a}\nabla \Phi(x')+\nabla f(\tfrac{x'}{\ve})\right |^{2}}-|\nabla f(\tfrac{x'}{\ve})|\right )+|\nabla f(\tfrac{x'}{\ve})|\,dx'.
\end{align*}
If we now denote by $\rho_{\ve}(x') = \sqrt{\ve^{2(1-a)}+\left |\ve^{1-a}\nabla \Phi(x') + \nabla f(\tfrac{x'}{\ve})\right |^{2}}-|\nabla f(\tfrac{x'}{\ve})|$, it is not difficult to see that
$$
|\rho_{\ve}(x')|\leq \ve^{1-a}(1+|\nabla\Phi(x')|)
$$
from where it follows that
$$
\int_{U'} \left( \sqrt{\ve^{2(1-a)}+\left |\ve^{1-a}\nabla \Phi(x') + \nabla f(\tfrac{x'}{\ve})\right |^{2}} - |\nabla f(\tfrac{x'}{\ve})|\right )dx'\rightarrow 0\quad \text{when } \ve\rightarrow 0.
$$

Finally, by the periodicity of $f$, we conclude that
$$
\int_{U'} |\nabla  f(\tfrac{x'}{\ve})|\, dx' \rightarrow \int_Y |\nabla f(y)|\, dy =: \overline{|\nabla f|}>0.
$$

These estimates allow us to conclude that,
\begin{equation}\label{cota.gamma}
|\Gamma_{1,\ve}|_{N-1}\ge \ve^{a-1}\frac{\overline{|\nabla f|}}{2},
\end{equation}
for every $\ve>0$ small.

Now, from \eqref{cota.phi}, \eqref{cota.phi.borde} and \eqref{cota.gamma}, we obtain
$$
\lambda_\ve(\alpha) \le C \ve^{1-a} \to 0 \quad \text{when } \ve\to 0
$$
as we wanted to show. 
\end{proof}
\section{Supercritical and critical cases ($a\le1$)}\label{critical.sec}
Now taking into account Theorem \ref{estimacionTve}, we note that the supercritical and critical cases in Theorem \ref{teo.main} are special cases of a more general result. 

Indeed if $T_\ve\colon\Omega_\ve \to \Omega$ is a family of perturbations which satisfies the following condition:
\begin{equation}\label{propiedades.perturbaciones}
\begin{cases}
T_\ve = id_{\R^n}. & \quad \text{on each compact set } K\subset \Omega \text{ for }\ve<\ve_0(K)\\
J_\tau T_\ve^{-1} \stackrel{*}{\rightharpoonup}m,& \quad\text{weakly* in } L^\infty(\partial\Omega) \text{ when }\ve\to 0,
\end{cases}
\end{equation}
where $m\in L^\infty(\partial\Omega)$ then we get the following general result.
\begin{thm}\label{superycriticogeneral}
Let $\{T_\ve\}_{\ve>0}$ be a family of perturbations that satisfies condition \eqref{propiedades.perturbaciones}. Then 
$$
\lambda_\ve (\alpha)\to \lambda^*(\alpha),\text{ when }\ve\to  0,
$$
where $\lambda_\ve (\alpha)$ is given by \eqref{lambda.alpha} on $\Omega_\ve$ and $\lambda^*(\alpha) $ is given by
$$
\lambda^* (\alpha)=\inf\left \{\frac{\int_{\Omega}|\nabla u|^p+|u|^p\,dx}{\int_{\partial\Omega}|u|^p\,d\mu^*}\colon u\in W^{1,p}(\Omega),\ \mu^*(\{u=0\}\cap \partial\Omega)\geq \alpha \mu^*(\partial \Omega)\right \}.
$$
Here the measure $\mu^*$ is given by $d\mu^*=m\,dS$.

Moreover, if $u_\ve$ is an eigenfunction associated to $\lambda_\ve(\alpha)$ normalized as $\|u_\ve\|_{L^p(\partial\Omega_\ve)}=1$, then the sequence $\{u_\ve\circ T_\ve^{-1}\}_{\ve>0}\subset W^{1,p}(\Omega)$ is weakly pre compact and every accumulation point is an eigenfunction of $\lambda^*(\alpha)$.
\end{thm}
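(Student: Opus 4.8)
The plan is to transport both problems to the fixed domain $\Omega$ via the change of variables $u = v\circ T_\ve^{-1}$ (equivalently $v = u\circ T_\ve$) and recognize the limit as a $\Gamma$-limit. Writing $M_\ve(y) = DT_\ve(T_\ve^{-1}(y))$ and using the bulk change of variables together with the boundary formula $\int_{\partial\Omega_\ve}(g\circ T_\ve)\,dS = \int_{\partial\Omega} g\, J_\tau T_\ve^{-1}\,dS$ from Lemma \ref{Jepsiloncriticodebil*m}, one checks that $\lambda_\ve(\alpha)$ is the infimum over $u\in W^{1,p}(\Omega)$ of
$$
\mathcal F_\ve(u) = \frac{\int_\Omega (|M_\ve^T\nabla u|^p + |u|^p)\, JT_\ve^{-1}\,dy}{\int_{\partial\Omega}|u|^p\, J_\tau T_\ve^{-1}\,dS}
$$
subject to the transported constraint $\int_{\{u=0\}\cap\partial\Omega} J_\tau T_\ve^{-1}\,dS \ge \alpha\int_{\partial\Omega} J_\tau T_\ve^{-1}\,dS$; here we used that $T_\ve$ maps $\partial\Omega_\ve$ diffeomorphically onto $\partial\Omega$, so that $|\{v=0\}\cap\partial\Omega_\ve|_{n-1} = \int_{\{u=0\}\cap\partial\Omega} J_\tau T_\ve^{-1}\,dS$ and likewise for $|\partial\Omega_\ve|_{n-1}$.

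For the lower bound I would take optimal normalized competitors $u_\ve$, i.e. $\int_{\partial\Omega}|u_\ve|^p J_\tau T_\ve^{-1}\,dS = 1$, so that the numerator of $\mathcal F_\ve(u_\ve)$ equals $\lambda_\ve(\alpha)$, and pass to a weak limit $u_\ve\rightharpoonup u$ in $W^{1,p}(\Omega)$. The first line of \eqref{propiedades.perturbaciones} gives $M_\ve=I$ and $JT_\ve^{-1}=1$ on any fixed compact $K\subset\Omega$ for $\ve$ small, so restricting the (nonnegative) numerator to $K$ and using weak lower semicontinuity of $\int_K|\nabla\cdot|^p$, then letting $K\uparrow\Omega$, yields $\liminf_\ve\int_\Omega(|M_\ve^T\nabla u_\ve|^p+|u_\ve|^p)JT_\ve^{-1}\,dy\ge\int_\Omega(|\nabla u|^p+|u|^p)\,dy$. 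For the denominator, compactness of the trace operator $W^{1,p}(\Omega)\to L^p(\partial\Omega)$ gives $|u_\ve|^p\to|u|^p$ strongly in $L^1(\partial\Omega)$, and pairing this with $J_\tau T_\ve^{-1}\stackrel{*}{\rightharpoonup}m$ shows $1=\int_{\partial\Omega}|u_\ve|^p J_\tau T_\ve^{-1}\,dS\to\int_{\partial\Omega}|u|^p\,d\mu^*$. Hence $\liminf_\ve\lambda_\ve(\alpha)\ge\int_\Omega(|\nabla u|^p+|u|^p)\,dy=\mathcal F^*(u)$, where $\mathcal F^*$ denotes the limiting quotient (its denominator being $1$), and $\mathcal F^*(u)\ge\lambda^*(\alpha)$ \emph{provided $u$ is admissible}.

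The delicate point, and the step I expect to be the crux, is precisely showing that the weak limit $u$ inherits the constraint $\mu^*(\{u=0\}\cap\partial\Omega)\ge\alpha\mu^*(\partial\Omega)$: one cannot pass the product $\chi_{\{u_\ve=0\}}\, J_\tau T_\ve^{-1}$ to the limit, since both factors converge only weakly-*. I would argue through superlevel sets. Writing $A_\ve=\{u_\ve=0\}\cap\partial\Omega$, strong $L^p(\partial\Omega)$ convergence of traces gives $\int_{A_\ve\cap\{|u|>\delta\}}|u|^p\,dS=\int_{A_\ve\cap\{|u|>\delta\}}|u_\ve-u|^p\,dS\to 0$, whence $|A_\ve\cap\{|u|>\delta\}|_{n-1}\to 0$ for every $\delta>0$. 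Splitting $\int_{A_\ve}J_\tau T_\ve^{-1}\,dS$ accordingly, the part over $\{|u|>\delta\}$ is at most $\|J_\tau T_\ve^{-1}\|_\infty\,|A_\ve\cap\{|u|>\delta\}|_{n-1}\to 0$ (the uniform $L^\infty$ bound coming from weak-* convergence), while the part over $\{|u|\le\delta\}$ is bounded by $\int_{\{|u|\le\delta\}}J_\tau T_\ve^{-1}\,dS\to\mu^*(\{|u|\le\delta\})$. Combined with $\int_{A_\ve}J_\tau T_\ve^{-1}\,dS\ge\alpha\int_{\partial\Omega}J_\tau T_\ve^{-1}\,dS\to\alpha\mu^*(\partial\Omega)$, this gives $\mu^*(\{|u|\le\delta\})\ge\alpha\mu^*(\partial\Omega)$ for all $\delta>0$, and letting $\delta\downarrow 0$ yields admissibility, closing the lower bound $\liminf_\ve\lambda_\ve(\alpha)\ge\lambda^*(\alpha)$.

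For the matching upper bound I would use the constant recovery sequence $u_\ve\equiv u$ for a near-minimizer $u$ of $\lambda^*(\alpha)$: the bulk terms converge to $\int_\Omega(|\nabla u|^p+|u|^p)\,dy$ because $M_\ve=I$, $JT_\ve^{-1}=1$ off a boundary layer whose measure tends to zero, the coefficients are uniformly bounded (Section \ref{changes.variables.sec}), and $|\nabla u|^p,|u|^p$ are integrable; meanwhile $\int_{\partial\Omega}|u|^p J_\tau T_\ve^{-1}\,dS\to\int_{\partial\Omega}|u|^p\,d\mu^*$ directly from weak-* convergence. To make this competitor admissible for finite $\ve$ (the transported constraint is an inequality that could fail marginally), I would arrange $\mu^*(\{u=0\})>\alpha\mu^*(\partial\Omega)$ strictly by a standard adjustment enlarging the zero set slightly, so that $\int_{\{u=0\}}J_\tau T_\ve^{-1}\,dS\ge\alpha\int_{\partial\Omega}J_\tau T_\ve^{-1}\,dS$ holds for all small $\ve$. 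This gives $\limsup_\ve\lambda_\ve(\alpha)\le\lambda^*(\alpha)$, and with the lower bound, $\lambda_\ve(\alpha)\to\lambda^*(\alpha)$. Finally, the a priori bound $\lambda_\ve(\alpha)\le C$ from the recovery construction, together with the uniform bounds on $DT_\ve$ and $JT_\ve$ of Section \ref{changes.variables.sec}, makes $\{u_\ve\circ T_\ve^{-1}\}$ bounded, hence weakly precompact, in $W^{1,p}(\Omega)$; and the lower bound argument applied to any accumulation point $u$ shows it is admissible with $\mathcal F^*(u)=\lambda^*(\alpha)$, i.e. an eigenfunction of $\lambda^*(\alpha)$.
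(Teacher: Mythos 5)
Your proposal is correct, and it departs from the paper's argument at the two decisive points. For the lower bound on the energy, the paper only establishes the liminf inequality along sequences of \emph{minimizers}: it invokes the Euler--Lagrange equation satisfied by $v_\ve$, localizes with a cut-off on a compact set where $T_\ve$ is the identity, and uses the monotonicity of $\xi\mapsto|\xi|^{p-2}\xi$ to deduce $\nabla v_\ve\to\nabla v$ a.e., finishing with Fatou's Lemma; this restriction is precisely why the paper introduces the weakened notion of $\Gamma$-convergence in Proposition \ref{g-convergencia}. You instead restrict the nonnegative transported energy to a compact $K\subset\Omega$, where the first line of \eqref{propiedades.perturbaciones} reduces it to $\int_K|\nabla v_\ve|^p+|v_\ve|^p\,dx$, apply convexity/weak lower semicontinuity there, and exhaust $\Omega$; this is simpler, requires no extremality of the sequence, and in fact yields the full $\Gamma$-liminf inequality that the authors say they were unable to prove in general. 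For passing the measure constraint to the limit, the paper proves the abstract Lemma \ref{lema.medidas1} (that $\limsup_k\mu_k(\{f_k=0\})\le\mu(\{f=0\})$ under a.e.\ convergence of the functions and setwise convergence of the measures) by a contradiction argument; your Chebyshev-type computation --- $|\{u_\ve=0\}\cap\{|u|>\delta\}|_{n-1}\to 0$ from strong $L^p(\partial\Omega)$ convergence of the traces, then splitting $\int_{\{u_\ve=0\}}J_\tau T_\ve^{-1}\,dS$ over $\{|u|>\delta\}$ and $\{|u|\le\delta\}$ --- is a more elementary, quantitative proof of the same fact in this concrete setting, at the cost of generality. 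Your recovery sequence (enlarging the zero set to make the constraint strict, then using $\mu_\ve(A)\to\mu^*(A)$ for fixed $A$) is essentially the paper's Lemma \ref{lema.mosco}. One presentational caveat: the uniform bounds on $DT_\ve$ and $JT_\ve^{\pm 1}$ that you invoke for coercivity are not literally contained in \eqref{propiedades.perturbaciones}, but the paper relies on them in exactly the same implicit way through \eqref{tauLp}--\eqref{tauWp}, so this is not a gap relative to the paper.
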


Clearly, Theorem \ref{superycriticogeneral} implies the critical ($a=1$) and supercritical ($a>1$) cases in Theorem \ref{teo.main}. Also Theorem \ref{superycriticogeneral} implies Theorem 6.2 in \cite{Bonder-Neves-Delpezzo}.

Before starting the proof we need the following observations.

Let $\Omega_1, \Omega_2 \subset \R^n$ be open domains and suppose that there exists a diffeomorphism $T\colon\Omega_1 \to \Omega_2$. This diffeomorphism $T$ induces the mapping 
$$
\T\colon W^{1,p}(\Omega_2)\to W^{1,p}(\Omega_1), \quad \T(u)=u\circ T.
$$ 
This mapping is linear, continuous and invertible, with $\T^{-1}v=v\circ T^{-1}$. Moreover a direct application of the Change of Variables Theorem implies that 
\begin{equation}\label{tauLp}
\int_{\Omega_1} |\T u|^p\, dx\le \|JT^{-1}\|_\infty \int_{\Omega_2} |u|^p\, dx
\end{equation}
and 
\begin{equation}\label{tauWp}
\begin{split}
\int_{\Omega_1}|\nabla(\T u)|^p\, dx &\le \|JT^{-1}\|_\infty \|DT\|_\infty \int_{\Omega_2} |\nabla u|^p\, dy.
\end{split}
\end{equation}
Then if we consider now the general pertubations $T_\ve\colon\Omega_\ve \to \Omega$ which satisfies the properties \eqref{propiedades.perturbaciones} we get the associated mappings $\T_\ve\colon W^{1,p}(\Omega)\to W^{1,p}(\Omega_\ve)$, which are linear, invertible and, by \eqref{tauLp} and \eqref{tauWp}, bi-continuous.

With this in mind we define the functions $Q_\ve \colon W^{1,p}(\Omega_\ve)\to \R$, $Q\colon W^{1,p}(\Omega)\to \R$ by
\begin{equation}\label{funperturbado}
Q_{\ve}(u)=\int_{\Omega_{\ve}}|\nabla{u}|^{p}+|u|^{p}dx
\end{equation}
and
\begin{equation}\label{funsinperturbar}
Q(v)=\int_{\Omega}|\nabla v|^p + |v|^p \, dy.
\end{equation}
We now consider the function $\tilde{Q}_\ve:W^{1,p}(\Omega)\to \R$ defined by $\tilde{Q}_\ve=Q_\ve\circ \T_\ve^{-1}$.

We introduce the sets  
\begin{equation}
\label{Xepalpha} X_\alpha^\ve := \{u\in W^{1,p}(\Omega_\ve)\colon |\{u=0\}\cap \partial\Omega_\ve|_{n-1}\ge \alpha |\partial\Omega_\ve|_{n-1} \text{ and } \|u_\ve\|_{L^p(\partial\Omega_\ve)}=1\},
\end{equation}
\begin{equation}\label{tildeXealpha}
\tilde{X}_\alpha^\ve := \T_\ve(X_\alpha^\ve) = \{v\in W^{1,p}(\Omega)\colon v\circ T_\ve^{-1} \in X_\alpha^\ve\}.
\end{equation}
\begin{equation}\label{X*alpha}
X_\alpha^* := \{v\in W^{1,p}(\Omega)\colon \mu^*(\{v=0\}\cap \partial\Omega)\ge \alpha \mu^*(\partial\Omega) \text{ and } \|v\|_{L^p(d\mu^*)}=1\},
\end{equation}
where $d\mu^*=m\,dS$.

With the above notations, we can write
$$
\lambda_\ve(\alpha) = \inf_{u\in X_\alpha^\ve} Q_\ve(u) = \inf_{v\in \tilde{X}_\alpha^\ve} \tilde{Q}_\ve(v)\quad \text{and}\quad \lambda^*(\alpha) = \inf_{v\in X^*_\alpha} Q(v).
$$

In order to prove the convergence of these minima, we use the notion of {\em $\Gamma-$conver-gence}. This notion was introduced by E. De Giorgi in the 60's and is by now a classical subject in dealing with variational problems. We refer the reader to  the books of A. Braides \cite{Braides} and of G. Dal Maso \cite{DalMaso}.

For the sake of completeness, we recall the definition of $\Gamma-$convergence.
\begin{de}
Let $(X,d)$ be a metric space and let $J_\ve, J\colon X\to (-\infty, +\infty]$. We say that $J_\ve$ $\Gamma-$converges to $J$ as $\ve\to 0$ if
\begin{itemize}
\item ($\liminf$ inequality) For every $x\in X$ and for every sequence $\{x_\ve\}_{\ve>0}\subset X$ such that $x_\ve\to x$, we have
$$
J(x)\le \liminf_{\ve\to 0} J_\ve(x_\ve).
$$

\item ($\limsup$ inequality) For every $x\in X$ there exists $\{y_\ve\}_{\ve}\subset X$ such that $y_\ve\to x$ and
$$
J(x)\ge \limsup_{\ve\to 0} J_\ve(y_\ve).
$$
\end{itemize}
We denote this convergence by $J = \glim_{\ve\to 0} J_\ve$.
\end{de}

This notion is extremely useful in dealing with convergence of minima as the following theorem shows.
\begin{thm}\label{convergencia.minimos}
Let $(X,d)$ be a metric space and let $J_\ve, J\colon X\to (-\infty, +\infty]$ be such that $J = \glim_{\ve\to 0} J_\ve$. Assume that for every $\ve>0$ there exists $x_\ve\in X$ such that $J_\ve(x_\ve)=\inf_X J_\ve$. Moreover, assume that $\{x_\ve\}_{\ve>0}$ is precompact in $X$. Then
\begin{itemize}
\item $\inf_X J = \lim_{\ve\to 0} \inf_X J_\ve$.

\item If $x$ is any accumulation point of the sequence $\{x_\ve\}_{\ve>0}$, then $J(x) = \inf_X J$.
\end{itemize}
\end{thm}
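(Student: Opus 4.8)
The plan is to prove both assertions at once by squeezing the sequence of minimum values $m_\ve := \inf_X J_\ve = J_\ve(x_\ve)$ between $\inf_X J$ from above and below. The upper bound on $\limsup m_\ve$ will come from the $\limsup$ inequality combined with the minimality of $x_\ve$, while the lower bound on $\liminf m_\ve$ will come from the $\liminf$ inequality, and it is precisely here that the precompactness hypothesis is indispensable, since the $\liminf$ inequality can only be invoked along a convergent subsequence of the minimizers.

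First I would show $\limsup_{\ve\to 0} m_\ve \le \inf_X J$. Fix an arbitrary $y\in X$. By the $\limsup$ inequality there is a recovery sequence $y_\ve\to y$ with $\limsup_{\ve\to 0} J_\ve(y_\ve)\le J(y)$. Since $x_\ve$ minimizes $J_\ve$, we have $m_\ve = J_\ve(x_\ve)\le J_\ve(y_\ve)$ for every $\ve$, and therefore $\limsup_{\ve\to 0} m_\ve \le \limsup_{\ve\to 0} J_\ve(y_\ve)\le J(y)$. Taking the infimum over $y\in X$ yields $\limsup_{\ve\to 0} m_\ve \le \inf_X J$.

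Next I would establish $\inf_X J \le \liminf_{\ve\to 0} m_\ve$. I would select a subsequence $\ve_k$ along which $m_{\ve_k}\to \liminf_{\ve\to 0} m_\ve$, and then, using that $\{x_\ve\}_{\ve>0}$ is precompact, extract a further subsequence (not relabeled) so that $x_{\ve_k}\to x$ for some $x\in X$. The $\liminf$ inequality applied to this convergent subsequence gives $J(x)\le \liminf_k J_{\ve_k}(x_{\ve_k}) = \liminf_k m_{\ve_k} = \liminf_{\ve\to 0} m_\ve$, and since $\inf_X J\le J(x)$ the desired bound follows. Combining the two bounds produces $\limsup_{\ve\to 0} m_\ve \le \inf_X J \le \liminf_{\ve\to 0} m_\ve \le \limsup_{\ve\to 0} m_\ve$, so every inequality is an equality, $\lim_{\ve\to 0} m_\ve = \inf_X J$, and this common value is finite because it equals $J(x)>-\infty$. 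This is exactly the first bullet.

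Finally, for the second bullet, let $x$ be any accumulation point of $\{x_\ve\}_{\ve>0}$, say $x_{\ve_k}\to x$ along some subsequence. The $\liminf$ inequality together with the convergence of the minima already proved gives $J(x)\le \liminf_k J_{\ve_k}(x_{\ve_k}) = \liminf_k m_{\ve_k} = \inf_X J$, while $J(x)\ge \inf_X J$ is trivial; hence $J(x)=\inf_X J$. The only delicate point, and the step I expect to require the most care, is the passage from subsequential statements to a genuine limit of the full family $m_\ve$: without precompactness one cannot produce the convergent subsequence of minimizers needed to apply the $\liminf$ inequality, and the equalities in the squeeze would break down. Everything else reduces to routine manipulation of $\liminf$ and $\limsup$.
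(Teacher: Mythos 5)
Your argument is correct and is precisely the standard proof of the fundamental theorem of $\Gamma$-convergence: the paper itself omits the proof, deferring to Braides and Dal Maso, where this same squeeze of $m_\ve=\inf_X J_\ve$ between the $\limsup$ bound (via recovery sequences and minimality) and the $\liminf$ bound (via precompactness of the minimizers) appears. The only point worth making explicit is that invoking the $\liminf$ inequality along the subsequence $\{x_{\ve_k}\}$ uses the routine fact that the $\Gamma$-$\liminf$ inequality for the full family passes to subfamilies (extend $x_{\ve_k}$ to a full family converging to $x$ and note that the $\liminf$ over the full family is no larger than over the subfamily), which is standard and does not affect the validity of your proof.
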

 The proof of this theorem is elemental and can be found in any of the above mentioned books \cite{Braides, DalMaso}.

We apply this theorem to the functions $J_\ve, J\colon L^p(\Omega)\to (-\infty,+\infty]$ given by
\begin{align}\label{Jve}
J_\ve(v) &:= \begin{cases}
\tilde{Q}_\ve(v) & \text{ if } v\in \tilde{X}_\alpha^\ve\\
+\infty & \text{ if not}
\end{cases}\\ \label{J}
J(v) &:= \begin{cases}
Q(v) & \text{ if } v\in X_\alpha^*\\
+\infty & \text{ if not}
\end{cases}
\end{align}

We begin by showing the $\Gamma-$convergence of the functionals. For this we need the following lemmas.

\begin{lemp}\label{lema.medidas1}
Let $(X, \Sigma,\nu)$ be a measurable space of finite measure and let $\{f_k\}_{k\in\N},f$ be $\Sigma-$measurable and nonnegative functions such that $f_k\to f$ $\nu-$a.e. 

Let $\{\mu_k\}_{k\in\N}$ and $\mu$ be absolutely continuous measures with respect to $\nu$ such that $\mu_k(A)\to \mu(A)$, for every $A\in \Sigma$.

Then
$$
\limsup_{k\to \ito}\mu_k(\{f_k =0\})\leq \mu(\{f=0\}).
$$
\end{lemp}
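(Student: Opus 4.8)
The plan is to decompose the zero set of $f_k$ according to whether the limit $f$ vanishes or not, and to treat the two pieces by different mechanisms. Write
$$
\{f_k=0\} = \big(\{f_k=0\}\cap\{f=0\}\big)\ \cup\ \big(\{f_k=0\}\cap\{f>0\}\big)
$$
and set $E_k := \{f_k=0\}\cap\{f>0\}$. Since the first piece is contained in the fixed set $\{f=0\}$, subadditivity and monotonicity of $\mu_k$ give
$$
\mu_k(\{f_k=0\}) \le \mu_k(\{f=0\}) + \mu_k(E_k).
$$
The first term is harmless: applying the hypothesis $\mu_k(A)\to\mu(A)$ to the fixed set $A=\{f=0\}$ yields $\mu_k(\{f=0\})\to\mu(\{f=0\})$. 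Thus the whole matter reduces to proving $\mu_k(E_k)\to 0$, after which taking $\limsup_k$ of the displayed inequality gives the claim.

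To control the second term I would first observe that the \emph{$\nu$-measure} of $E_k$ tends to zero. Indeed, for $\nu$-a.e.\ $x$ either $f(x)=0$, in which case $x\notin E_k$ for all $k$, or $f(x)>0$, in which case $f_k(x)\to f(x)>0$ forces $f_k(x)\neq 0$ for all large $k$, so $x\notin E_k$ eventually. Hence $\chi_{E_k}\to 0$ $\nu$-a.e., and since $\chi_{E_k}\le 1$ with $\nu$ finite, the dominated convergence theorem gives $\nu(E_k)=\int_X \chi_{E_k}\,d\nu \to 0$.

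The delicate point, and the main obstacle, is passing from $\nu(E_k)\to 0$ to $\mu_k(E_k)\to 0$, because here both the measure $\mu_k$ and the set $E_k$ vary with $k$ simultaneously. The mechanism that decouples them is the uniform absolute continuity of the family $\{\mu_k\}$ with respect to $\nu$: for every $\eta>0$ there is $\delta>0$ such that $\nu(A)<\delta \Rightarrow \sup_k \mu_k(A)<\eta$. This uniformity is exactly the content of the Vitali--Hahn--Saks theorem, whose hypotheses hold here, since each $\mu_k$ is absolutely continuous with respect to the finite measure $\nu$ and $\mu_k(A)$ converges for every $A\in\Sigma$. Granting it, once $\nu(E_k)<\delta$ for $k$ large we get $\mu_k(E_k)<\eta$; as $\eta$ is arbitrary, $\mu_k(E_k)\to 0$.

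Combining the three steps and taking $\limsup_{k\to\infty}$ in the displayed inequality yields $\limsup_k \mu_k(\{f_k=0\})\le \mu(\{f=0\})$, as desired. I would close with the remark that in the intended application of this lemma the measures $\mu_k$ have densities with respect to $\nu$ that are uniformly bounded in $L^\infty$ (they arise from tangential Jacobians controlled by Theorem \ref{estimacionTve}), so the uniform absolute continuity is immediate and the appeal to Vitali--Hahn--Saks can be bypassed entirely.
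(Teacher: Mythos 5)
Your proof is correct, but it follows a genuinely different route from the one in the paper. You split $\{f_k=0\}$ into the part lying in the fixed set $\{f=0\}$ (handled directly by the hypothesis $\mu_k(A)\to\mu(A)$) and the remainder $E_k=\{f_k=0\}\cap\{f>0\}$, show $\nu(E_k)\to 0$ from the a.e.\ convergence, and then pass from $\nu(E_k)\to 0$ to $\mu_k(E_k)\to 0$ by invoking the Vitali--Hahn--Saks theorem to obtain uniform absolute continuity of $\{\mu_k\}$ with respect to $\nu$. You correctly identify the simultaneous variation of the set and the measure as the crux, and VHS is exactly the right tool to decouple them; the only hypothesis you are using beyond what is written is that the $\mu_k$ are finite, which is needed for VHS and is implicit in the lemma (and automatic in the application, where the densities $J_\tau T_\ve^{-1}$ are uniformly bounded, as you note). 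The paper instead argues by contradiction and never leaves the realm of set-wise convergence: it writes $\{f=0\}=\bigcap_j\{f\le 1/j\}$, uses continuity from above together with the inclusion $\{f\le 1/j\}\supset\bigcap_{k_0}\bigcup_{k\ge k_0}\{f_k<1/j\}$ coming from a.e.\ convergence, applies the hypothesis $\mu_k(A)\to\mu(A)$ to the \emph{fixed} sets $A=\bigcup_{k\ge k_0}\{f_k<1/j\}$, and finally uses $\{f_k=0\}\subset A$ for $k\ge k_0$ to contradict the assumed gap of size $\delta$. Your argument is arguably cleaner and more transparent about where the difficulty lies, at the price of citing a nontrivial theorem; the paper's argument is more elementary and self-contained, using only monotone continuity of measures and the stated hypotheses. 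Your closing observation that in the intended application the uniform absolute continuity is immediate from the $L^\ito$ bound on the densities, so that VHS can be bypassed, makes your version fully elementary in the case that actually matters.
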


\begin{rk}
When $\mu_k=\mu$ for every $k\in\N$ this is well known with a simple proof. In this case, the difficulty appears since the measures vary. We do not know if this result is known nor if the hypotheses are optimal. Nevertheless it will suffices for our purposes.
\end{rk}

\begin{rk}
By standard arguments, it can be shown that the condition $\mu_n(A)\to \mu(A)$ for every $A\in \Sigma$ is equivalent to the weak convergence of the densities of the measures in $L^1(X,\nu)$.
\end{rk}

\begin{proof}[Proof of Lemma \ref{lema.medidas1}]
Assume by contradiction that there exists $\delta>0$ such that, for all $k_0\in\N$ there exists at least one $k\geq k_0$ such that
$$
\mu(\{f=0\})+\delta < \mu_k(\{f_k=0\}).
$$ 
Since $\{f=0\}=\bigcap_{j=1}^{\ito}\{f\leq 1/j\}$ it follows that $\mu(\{f=0\})=\lim_{j\to \ito}\mu(\{f\leq 1/j\})$. Hence, there exists $j_0\in\N$ such that, for $j\ge j_0$, 
$$
\mu(\{f\leq 1/j\})+\frac{\delta}{2}<\mu_k(\{f_k =0\}).
$$
On the other hand, since $f_k\to f$ $\nu-$a.e., it follows that
$$
\{f\leq 1/j\}\supset \bigcap_{k_0 =1}^{\ito}\bigcup_{k\geq k_0}\{f_k <1/j\},
$$
so
$$
\mu \left( \bigcap_{k_0 =1}^{\ito}\bigcup_{k\geq k_0}\{f_k <1/j\}\right)\leq \mu(\{f\leq 1/j\}.
$$
But, since
$$
\lim_{k_0 \to\ito}\mu\left(\bigcup_{k\geq k_0}\{f_k>1/j\}\right)=\mu \left( \bigcap_{k_0 =1}^{\ito}\bigcup_{k\geq k_0}\{f_k <1/j\}\right),
$$
there exists $k_0(\delta)$ such that
$$
\mu\left ( \bigcup_{k\geq k_0}\{f_k<1/j\} \right )+\frac{\delta}{4}<\mu_{k}(\{f_k =0\}).
$$
Calling $A=\cup_{i\geq k_0}\{f_i<1/j\}$, by hypothesis we have that $\lim_{k\to\ito}\mu_k(A)=\mu(A)$ and therefore, 
$$
\mu_k \left ( \bigcup_{i\geq n_0}\{f_i<1/j\} \right )+\frac{\delta}{8}<\mu_k(\{f_k =0\}).
$$
Finally observe that $\{f_k >1/j\}\subset \bigcup_{i\geq n_0}\{f_i<1/j\}$ and hence we can conclude that
$$
\mu_k(\{f_k <1/j\})+\frac{\delta}{8}< \mu_k(\{f_k =0\}),
$$
a contradiction.
\end{proof}

\begin{lemp}\label{lema.mosco}
Let $\tilde{X}_\alpha^\ve, X_\alpha^*\subset W^{1,p}(\Omega)$ be the sets defined in \eqref{tildeXealpha} and \eqref{X*alpha} respectively. Then, given $v_\ve\in \tilde{X}_\alpha^\ve$ such that $v_\ve\rightharpoonup v$ weakly in $W^{1,p}(\Omega)$, it follows that $v\in X_\alpha^*$. 

Reciprocally, for every $v\in X_\alpha^*$ there exists a sequence $\{\ve_k\}_{k\in\N}$ such that $\ve_k\downarrow 0$ and $v_k\in \tilde{X}_\alpha^{\ve_k}$ such that $v_k\rightharpoonup v$ weakly in $W^{1,p}(\Omega)$. Moreover, the sequence can be taken to converge strongly in $W^{1,p}(\Omega)$.
\end{lemp}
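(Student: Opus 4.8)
The plan is to push both membership conditions onto the fixed boundary $\partial\Omega$ through the tangential change of variables and then to let $\ve\to0$: the first (inclusion) statement will follow from Lemma \ref{lema.medidas1}, and the recovery statement from a truncation‑and‑small‑enlargement argument. As in the proof of Lemma \ref{Jepsiloncriticodebil*m}, the tangential change of variables reads $\int_{\partial\Omega}g\,J_\tau T_\ve^{-1}\,dS=\int_{\partial\Omega_\ve}(g\circ T_\ve)\,dS$; taking $g=\chi_{\{v=0\}}$ and $g=|v|^p$ shows that, writing $d\mu_\ve:=J_\tau T_\ve^{-1}\,dS$, a function $v\in W^{1,p}(\Omega)$ lies in $\tilde X_\alpha^\ve$ exactly when
\[
\mu_\ve(\{v=0\}\cap\partial\Omega)\ge\alpha\,\mu_\ve(\partial\Omega)\qquad\text{and}\qquad \int_{\partial\Omega}|v|^p\,d\mu_\ve=1 .
\]
By Theorem \ref{estimacionTve} (or directly by \eqref{propiedades.perturbaciones}) we have $J_\tau T_\ve^{-1}\stackrel{*}{\rightharpoonup}m$ in $L^\infty(\partial\Omega)$, so $\mu_\ve(A)\to\mu^*(A)$ for every measurable $A\subset\partial\Omega$, $\int|w|^p\,d\mu_\ve\to\int|w|^p\,d\mu^*$ for every fixed $w$ with trace in $L^p(\partial\Omega)$, and $\|J_\tau T_\ve^{-1}\|_{L^\infty}\le C$, i.e.\ $\mu_\ve\le C\,dS$. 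I will also use repeatedly that the trace $W^{1,p}(\Omega)\to L^p(\partial\Omega)$ is compact, so weak $W^{1,p}$‑convergence forces strong convergence of traces in $L^p(\partial\Omega)$.

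For the first assertion, let $v_\ve\in\tilde X_\alpha^\ve$ with $v_\ve\rightharpoonup v$ in $W^{1,p}(\Omega)$. Along a subsequence the traces converge strongly in $L^p(\partial\Omega)$, hence $|v_\ve|\to|v|$ $dS$‑a.e. Applying Lemma \ref{lema.medidas1} with $\nu=dS$, $f_\ve=|v_\ve|$, $f=|v|$, $\mu_k=\mu_\ve$ and $\mu=\mu^*$ gives $\limsup_\ve\mu_\ve(\{v_\ve=0\})\le\mu^*(\{v=0\})$. Since the first constraint and $\mu_\ve(\partial\Omega)\to\mu^*(\partial\Omega)$ give $\liminf_\ve\mu_\ve(\{v_\ve=0\})\ge\alpha\,\mu^*(\partial\Omega)$, we conclude $\mu^*(\{v=0\}\cap\partial\Omega)\ge\alpha\,\mu^*(\partial\Omega)$. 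For the normalization I split
\[
\int|v_\ve|^p\,d\mu_\ve-\int|v|^p\,d\mu^*=\int(|v_\ve|^p-|v|^p)\,J_\tau T_\ve^{-1}\,dS+\int|v|^p\,(J_\tau T_\ve^{-1}-m)\,dS ,
\]
where the first term vanishes because $|v_\ve|^p\to|v|^p$ in $L^1(\partial\Omega)$ and $J_\tau T_\ve^{-1}$ is bounded, and the second because $|v|^p\in L^1(\partial\Omega)$ and $J_\tau T_\ve^{-1}\stackrel{*}{\rightharpoonup}m$. Hence $\int|v|^p\,d\mu^*=1$ and $v\in X_\alpha^*$.

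For the recovery sequence, fix $v\in X_\alpha^*$, set $Z=\{v=0\}\cap\partial\Omega$, and let $d_\ve:=\big(\alpha\,\mu_\ve(\partial\Omega)-\mu_\ve(Z)\big)_+$, so that $d_\ve\to\big(\alpha\,\mu^*(\partial\Omega)-\mu^*(Z)\big)_+=0$. If $\mu^*(Z)>\alpha\,\mu^*(\partial\Omega)$ then $d_\ve=0$ for small $\ve$ and one may take $v$ itself (rescaled); in general I would enlarge the boundary zero set by a small set absorbing the vanishing deficit. Since $\int|v|^p\,d\mu^*=1$ and $\alpha<1$, the set $Z^c$ has positive $\mu^*$‑measure, so there is $t_0>0$ with $\mu^*(\{|v|\ge t_0\})>0$. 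Because $\|J_\tau T_\ve^{-1}\|_\infty\le C$, a fixed superlevel set $H=\{J_\tau T_\ve^{-1}\ge\tau\}\cap\{|v|\ge t_0\}$ carries $\mu_\ve$‑mass bounded below (for $\tau$ small and $\ve$ small), so I may choose a measurable $E_\ve\subset H\subset Z^c$ with $\mu_\ve(E_\ve)=d_\ve$; there the density is $\ge\tau$, whence $|E_\ve|_{n-1}\le d_\ve/\tau\to0$. Multiplying $v$ by a cutoff $\theta_\ve$ that vanishes on a boundary‑layer neighborhood of $E_\ve$ of fixed width and equals $1$ outside a slightly larger one, the product $v\theta_\ve$ vanishes on $Z\cup E_\ve$ and, since this neighborhood has volume tending to $0$ while $|\nabla\theta_\ve|$ stays bounded, $\|v\theta_\ve-v\|_{W^{1,p}(\Omega)}\to0$. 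Setting $v_\ve:=(v\theta_\ve)/\|v\theta_\ve\|_{L^p(d\mu_\ve)}$ we get $\mu_\ve(\{v_\ve=0\}\cap\partial\Omega)\ge\mu_\ve(Z)+d_\ve\ge\alpha\,\mu_\ve(\partial\Omega)$ and $\|v_\ve\|_{L^p(d\mu_\ve)}=1$, so $v_\ve\in\tilde X_\alpha^\ve$; moreover $\|v\theta_\ve\|_{L^p(d\mu_\ve)}\to\|v\|_{L^p(d\mu^*)}=1$ by the splitting above, so $v_\ve\to v$ strongly in $W^{1,p}(\Omega)$. Any $\ve_k\downarrow0$ then works.

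The main obstacle is the degenerate \emph{equality} case $\mu^*(Z)=\alpha\,\mu^*(\partial\Omega)$. Here one cannot hope to enlarge the boundary zero set by a set of positive surface measure: since the traces of any sequence converging to $v$ in $W^{1,p}$ converge in $L^p(\partial\Omega)$, the zero set can grow at most by a null set, and in particular truncating $v$ near $0$ need not help when the trace of $v$ has a gap above $0$. What saves the argument is that only a \emph{vanishing} amount of $\mu_\ve$‑mass $d_\ve\to0$ must be added; this can be supplied by a set $E_\ve$ of surface measure $\to0$ (selected inside a superlevel set of $J_\tau T_\ve^{-1}$, so as not to be forced to use large surface area), on which the capacitary cost of forcing $v$ to vanish tends to $0$ and strong $W^{1,p}$‑convergence is preserved. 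Making the selection of $E_\ve$ and the estimate $\|v\theta_\ve-v\|_{W^{1,p}}\to0$ precise is the only genuinely delicate point.
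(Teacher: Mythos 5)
Your treatment of the first half (weak limits of elements of $\tilde X_\alpha^\ve$ belong to $X_\alpha^*$) is correct and is essentially the paper's own argument: pass to $dS$-a.e.\ convergent traces, apply Lemma \ref{lema.medidas1} to the measures $d\mu_\ve=J_\tau T_\ve^{-1}\,dS$ and $d\mu^*=m\,dS$, and combine with $\mu_\ve(\partial\Omega)\to\mu^*(\partial\Omega)$; your explicit verification of the normalization $\|v\|_{L^p(d\mu^*)}=1$ is a useful detail the paper leaves implicit.

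The recovery half, however, has a genuine gap, exactly at the step you yourself flag as delicate, and the route you choose there does not repair it. You propose to absorb the vanishing deficit $d_\ve$ by forcing $v$ to vanish on an extra boundary set $E_\ve\subset\{|v|\ge t_0\}$ via a cutoff $\theta_\ve$. The stated justification is internally inconsistent: a neighborhood of \emph{fixed} width of a nonempty $E_\ve$ contains a ball of fixed radius, so its volume cannot tend to zero (and a merely measurable $E_\ve$ of small surface measure may be dense in $H$, making any positive-width neighborhood large); if instead the width shrinks, $|\nabla\theta_\ve|$ blows up and the term $\int_\Omega|v|^p|\nabla\theta_\ve|^p\,dx$ must be estimated, which fails in general. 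The obstruction is not cosmetic: since $|v|\ge t_0$ on $E_\ve$, one has $\|v\theta_\ve-v\|^p_{W^{1,p}}\gtrsim t_0^p\,\mathrm{cap}_p(E_\ve;\Omega)$ up to lower-order terms, and for $p>n$ the relative $p$-capacity of any nonempty set is bounded below by a positive constant (Morrey's inequality), so in the equality case $\mu^*(Z)=\alpha\mu^*(\partial\Omega)$ with $d_\ve>0$ your approximants are not even bounded in $W^{1,p}(\Omega)$, let alone convergent to $v$. Even for $p<n$ the argument requires choosing $E_\ve$ concentrated in a ball of radius $r_\ve\sim d_\ve^{1/(n-1)}$ and estimating the cutoff energy by $r_\ve^{\,n-p}$, none of which appears in your proposal. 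The paper sidesteps the capacitary cost entirely by enlarging the zero set to a sublevel set of $v$ itself: it sets $\tilde v_k=\max\{v-\tfrac1k,0\}$, so that $\{\tilde v_k=0\}\supset\{v\le\tfrac1k\}$ is achieved at zero energy cost ($\tilde v_k\to v$ strongly in $W^{1,p}(\Omega)$ automatically), extracts a margin $\mu^*(\{\tilde v_k=0\})\ge(1+\rho_k)\alpha\mu^*(\partial\Omega)$, and only then chooses $\ve_k$ so small that $\mu_{\ve_k}$ is close enough to $\mu^*$ on the two relevant sets. To complete your proof you should either switch to that truncation device or restrict your construction as indicated above (and then it still covers only $p<n$).
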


\begin{rk}
The result of the previous Lemma says that the sets $\tilde{X}_\alpha^\ve$ converges in the sense of Mosco to the set $X_\alpha^*$. See \cite{Henrot}.
\end{rk}

\begin{proof}
Let $v\in X_\alpha^*$ and set $\Gamma=\{v=0\}\cap \partial\Omega$. 

Given $k\in \N$ define $\tilde v_k := \max\{v-\tfrac{1}{k},0\}$. Then, $\Gamma_k = \{\tilde v_k=0\}\cap \partial\Omega$ verifies that $\mu^*(\Gamma_k)>\mu^*(\Gamma)$ (recall that the weight $m$ is strictly positive). So, there exists $\rho_k>0$ such that
\begin{equation}\label{mu*1}
\mu^*(\Gamma_k) \ge (1 +\rho_k)\alpha \mu^*(\partial\Omega).
\end{equation}

It is straightforward to check that $\tilde v_k \to v$ strongly in $W^{1,p}(\Omega)$ as $k\to \infty$.

Now let $t_{k, \ve}>0$ be such that $v_{k, \ve} := t_{k, \ve} \tilde v_k$ verifies that $\|v_{k, \ve}\circ T_\ve\|_{L^p(\partial\Omega_\ve)} = 1$. It is easy to see that $\tilde v_k \to v$ ($k \to \infty$) strongly in $L^p(\partial\Omega)$ implies that $\lim_{\ve\downarrow 0}(\lim_{k\to \infty}t_{k,\ve}) \to 1$. So, we have that $v_{k,\ve} \to v$ strongly in $W^{1,p}(\Omega)$ as $k\to \infty$ and $\ve\downarrow 0$.

It remains to check that, given $k\in\N$ there exists $\ve_k$ with $\ve_k\downarrow 0$ such that $v_k := v_{k,\ve_k}\in \tilde X_\alpha^{\ve_k}$ and for this we have only to check that
$$
\mu_{\ve_k}(\Gamma_k)\ge \alpha \mu_{\ve_k}(\partial\Omega)
$$
where $\Gamma_k = \{v_k=0\}\cap \partial\Omega = \{\tilde v_k=0\}\cap\partial\Omega$ and $d\mu_{\ve} = J_\tau T_\ve dS$.

 But, since $\mu_{\ve}(A)\to \mu^*(A)$ for every $dS-$measurable set $A\subset \partial\Omega$, we have that there exists $\ve_k$ such that
\begin{equation}\label{mu*2}
\mu_{\ve_k}(\Gamma_k)\ge (1+\rho_k)^{-\frac12} \mu^*(\Gamma_k) \quad\text{and}\quad  \mu^*(\partial\Omega)\ge (1+\rho_k)^{-\frac12} \mu_{\ve_k}(\partial\Omega).
\end{equation}

Combining \eqref{mu*1} and \eqref{mu*2} we arrive at
\begin{align*}
\mu_{\ve_k}(\Gamma_k) &\ge (1+\rho_k)^{-\frac12}\mu^*(\Gamma_k) \\
&\ge(1+\rho_k)^{\frac12} \alpha\mu^*(\partial\Omega)\\
&\ge \alpha \mu_{\ve_k}(\partial\Omega)
\end{align*}

Now, we need to see that if $v_\ve\in \tilde X_\alpha^\ve$ is such that $v_\ve\rightharpoonup v$, then $v\in X_\alpha^*$. But this is an immediate consequence of Lemma \ref{lema.medidas1}. 

In fact, Lemma \ref{lema.medidas1} is applied to the functions $v_\ve, v\in W^{1,p}(\Omega)\subset L^p(\partial\Omega)$ (recall that we can assume that $v_\ve\to v$ $dS-$a.e.on $\partial\Omega$) and the measures
$$
d\mu_\ve = J_\tau T_\ve^{-1} dS,\quad d\mu^* = mdS,\quad d\nu=dS.
$$
As a consequence, we get
\begin{align*}
\mu(\{v=0\}\cap \partial\Omega)&\ge \limsup_{\ve\to 0} \mu_\ve(\{v_\ve=0\}\cap\partial\Omega) = \limsup_{\ve\to 0}\int_{\{v_\ve=0\}\cap\partial\Omega} J_\tau T_\ve^{-1}\, dS\\
&=\limsup_{\ve\to 0}|\{v_\ve\circ T_\ve=0\}\cap \partial\Omega_\ve|_{N-1}\\
&\ge \limsup_{\ve\to 0} \alpha |\partial\Omega_\ve|_{N-1} = \alpha \mu(\partial\Omega).
\end{align*}
This finishes the proof.
\end{proof}

Unfortunately, we are not able to prove the $\Gamma-$convergence of the functionals in its full generality. In fact we can only prove $\Gamma-$convergence for the supercritical case, that in this general setting will be in the case where $T_\ve \to id_{\R^n}$ in the $C^1$ topology. 

For the more general setting of \eqref{propiedades.perturbaciones}, we can prove a weaker version of $\Gamma-$convergence under which Theorem \ref{convergencia.minimos} still holds. Namely
\begin{prop}\label{g-convergencia}
Let $J_\ve, J\colon L^p(\Omega)\to (-\infty, +\infty]$ be the functionals defined by \eqref{Jve}-\eqref{J}. Assume that the transformations $T_\ve$ verify \eqref{propiedades.perturbaciones}.  Then:
\begin{itemize}
\item for every sequence $\{v_\ve\}_{\ve>0}\subset L^p(\Omega)$ of minimizers of $\{J_\ve\}_{\ve>0}$ such that $v_\ve\to v$ in $L^p(\Omega)$, we have that
$$
J(v)\le \liminf_{\ve\to 0} J_\ve(v_\ve).
$$

\item Moreover, for every $v\in L^p(\Omega)$, there exists $v_k\in L^p(\Omega)$ and $\ve_k\downarrow 0$ such that $v_k\to v$ in $L^p(\Omega)$ and
$$
J(v)\ge \limsup_{k\to\infty} J_{\ve_k}(v_k).
$$
\end{itemize}
\end{prop}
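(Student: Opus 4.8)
The plan is to prove the two inequalities separately, in each case reducing membership in the constrained classes to the Mosco-type statement of Lemma \ref{lema.mosco} and reducing the convergence of the energies to the structural properties \eqref{propiedades.perturbaciones} of the change of variables. Throughout I will write $u_\ve=\T_\ve(v_\ve)=v_\ve\circ T_\ve$, so that $v_\ve\in\tilde X_\alpha^\ve$ exactly when $u_\ve\in X_\alpha^\ve$, and $J_\ve(v_\ve)=\tilde Q_\ve(v_\ve)=Q_\ve(u_\ve)$.

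For the first inequality I would start from a sequence $\{v_\ve\}$ of minimizers with $v_\ve\to v$ in $L^p(\Omega)$. There is nothing to prove unless $\liminf_\ve J_\ve(v_\ve)<\ito$, so I pass to a subsequence realizing the lower limit; along it $\tilde Q_\ve(v_\ve)=\lambda_\ve(\alpha)$ stays bounded, hence $\|u_\ve\|_{W^{1,p}(\Omega_\ve)}^p=Q_\ve(u_\ve)$ is bounded. Using the change-of-variables estimates \eqref{tauLp}--\eqref{tauWp} together with the (uniform in $\ve$) control of $\|JT_\ve\|_\ito$ and $\|DT_\ve^{-1}\|_\ito$ available for our transformations, this transfers to a bound for $\{v_\ve\}$ in $W^{1,p}(\Omega)$. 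Since the convergence $v_\ve\to v$ in $L^p(\Omega)$ already identifies the weak limit, I obtain $v_\ve\rightharpoonup v$ weakly in $W^{1,p}(\Omega)$, and Lemma \ref{lema.mosco} then gives $v\in X_\alpha^*$, so that $J(v)=Q(v)$. The energy bound I would get by exhausting $\Omega$ from inside rather than transporting the whole integral: for a compact $K\subset\Omega$ and $\ve<\ve_0(K)$ the first line of \eqref{propiedades.perturbaciones} gives $T_\ve=id$ on $K$, whence $u_\ve=v_\ve$, $\nabla u_\ve=\nabla v_\ve$ there and $\tilde Q_\ve(v_\ve)\ge\int_K|\nabla v_\ve|^p+|v_\ve|^p\,dy$. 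Weak lower semicontinuity on $K$ yields $\int_K|\nabla v|^p+|v|^p\le\liminf_\ve\tilde Q_\ve(v_\ve)$, and letting $K\uparrow\Omega$ by monotone convergence gives $Q(v)\le\liminf_\ve\tilde Q_\ve(v_\ve)$. Combined with $J(v)=Q(v)$ this is the asserted $\liminf$ inequality.

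For the second inequality I fix $v\in L^p(\Omega)$. If $J(v)=+\ito$ the bound is trivial, so I assume $v\in X_\alpha^*$ and $J(v)=Q(v)$. The recovery part of Lemma \ref{lema.mosco} supplies $\ve_k\downarrow0$ and $v_k\in\tilde X_\alpha^{\ve_k}$ with $v_k\to v$ strongly in $W^{1,p}(\Omega)$; in particular $v_k\to v$ in $L^p(\Omega)$ and $J_{\ve_k}(v_k)=\tilde Q_{\ve_k}(v_k)$. To estimate the energy I again exhaust: given $\eta>0$ I choose a compact $K\subset\Omega$ with $\int_{\Omega\setminus K}|\nabla v|^p+|v|^p<\eta$; for $k$ large, $\ve_k<\ve_0(K)$, so $T_{\ve_k}=id$ on $K$ and, transporting the integral over $\Omega_{\ve_k}\setminus K$ back to $\Omega\setminus K$ where the Jacobian factors are bounded by a constant $C$,
\[
\tilde Q_{\ve_k}(v_k)\le Q(v_k)+C\int_{\Omega\setminus K}|\nabla v_k|^p+|v_k|^p\,dy .
\]
Letting $k\to\ito$ and using the strong convergence $v_k\to v$ gives $\limsup_k\tilde Q_{\ve_k}(v_k)\le Q(v)+C\eta$, and since $\eta>0$ is arbitrary, $\limsup_k J_{\ve_k}(v_k)\le Q(v)=J(v)$.

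The \emph{main obstacle}, and the reason for the weakened formulation, is concentrated in the recovery step. The sequence produced by Lemma \ref{lema.mosco} is admissible, $v_k\in\tilde X_\alpha^{\ve_k}$, only along a subsequence $\ve_k$: passing the limit constraint $\mu^*(\{v=0\}\cap\partial\Omega)\ge\alpha\mu^*(\partial\Omega)$ to the perturbed constraint $\mu_{\ve_k}(\{v_k=0\}\cap\partial\Omega)\ge\alpha\mu_{\ve_k}(\partial\Omega)$ relies on the set convergence $\mu_\ve(A)\to\mu^*(A)$ and can be secured only for well-chosen $\ve_k$. This is exactly what blocks full $\Gamma$-convergence and forces the $\limsup$ to be stated along a subsequence; restricting the $\liminf$ to minimizers is then what keeps the statement matched to the hypotheses of Theorem \ref{convergencia.minimos}, since minimizers automatically carry the uniform energy bound $\tilde Q_\ve(v_\ve)=\lambda_\ve(\alpha)$ that drives the compactness in the first step. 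The only remaining technical care is the uniform control of $\|JT_\ve\|_\ito$ and $\|DT_\ve^{-1}\|_\ito$ used for that compactness, which holds for the perturbations \eqref{cambiovariablesupercritico} by Theorem \ref{estimacionTve}.
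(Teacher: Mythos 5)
Your proof is correct, but the $\liminf$ half follows a genuinely different route from the paper's. The paper exploits the minimizer hypothesis in an essential way: it writes down the Euler--Lagrange equation satisfied by $v_\ve$, localizes with a cutoff $\eta$ supported in a compact set where $T_\ve=id$, tests with $\eta(v_\ve-v)$, and uses the monotonicity of the $p$-Laplacian to deduce $\nabla v_\ve\to\nabla v$ a.e.\ in $\Omega$, after which the inequality follows from Fatou's lemma. You instead discard the boundary layer, bound $\tilde Q_\ve(v_\ve)$ from below by $\int_K|\nabla v_\ve|^p+|v_\ve|^p\,dy$ on compacts $K$ where $T_\ve=id$, invoke convexity and weak lower semicontinuity of that functional on $W^{1,p}(K)$, and exhaust $\Omega$ from inside. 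Since your argument never uses that the $v_\ve$ are minimizers, it actually proves the $\liminf$ inequality for \emph{every} sequence $v_\ve\in\tilde X_\alpha^\ve$ with $v_\ve\to v$ in $L^p(\Omega)$ and finite $\liminf$ energy --- i.e.\ the genuine $\Gamma$-$\liminf$, which is stronger than what the proposition states; what you lose relative to the paper is the pointwise a.e.\ convergence of the gradients of the minimizers, a byproduct of their argument that is not used elsewhere. The $\limsup$ halves essentially coincide: both take the recovery sequence from Lemma \ref{lema.mosco} (which is where the restriction to a subsequence $\ve_k$ genuinely originates, as you note), the paper asserting the energy convergence without detail while you justify it by the same interior-exhaustion device. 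One caveat common to both proofs: passing from a bound on $\tilde Q_\ve(v_\ve)$ to a bound on $\|v_\ve\|_{W^{1,p}(\Omega)}$, and your constant $C$ on $\Omega\setminus K$, require uniform $L^\ito$ bounds on $DT_\ve$ and on the Jacobians, which are not literally contained in \eqref{propiedades.perturbaciones} but are used silently by the paper in \eqref{tauLp}--\eqref{tauWp} and do hold for the concrete perturbations of Theorem \ref{estimacionTve}; you flag this correctly.
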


\begin{rk}
Observe that the only difference with respect to $\Gamma-$convergence is that we do not prove the liminf inequality for {\em every} sequence $\{v_\ve\}_{\ve>0}$, but only for sequences of minimizers. It is straightforward to check that the conclusion of Theorem \ref{convergencia.minimos} still holds under this weaker assumption.
\end{rk}

\begin{proof}
We will divide the proof into two parts.

\noindent {\em lim inf inequality:} Let $v_\ve\in \tilde{X}_\alpha^\ve$ be such that $\tilde Q_\ve(v_\ve) = \inf_{\tilde X_\alpha^\ve} \tilde Q_\ve$ and assume that  $v_{\ve}\to v$ in $L^p(\Omega)$ as  $\ve \downarrow 0$ for some $v\in X^*_\alpha$. We can assume that
\begin{equation}\label{cota.supuesta}
\liminf_{\ve \rightarrow 0}\tilde{Q}_{\ve}(v_{\ve})<+\infty,
\end{equation}
otherwise there is nothing to prove. It is immediate to see that \eqref{tauLp} and \eqref{tauWp} imply that 
$$
\|v_\ve\|_{W^{1,p}(\Omega)} = Q(v_\ve) \le C \tilde Q_\ve(v_\ve)
$$
and so, by \eqref{cota.supuesta} we conclude that $\{v_\ve\}_{\ve>0}$ is bounded  on $W^{1,p}(\Omega)$ and since $v_\ve\to v$ in $L^p(\Omega)$ it easily follows that $v_\ve\rightharpoonup v$ weakly in $W^{1,p}(\Omega)$.

Since $v_\ve\to v$ strongly in $L^p(\Omega)$ and $J T_\ve\to 1$ a.e. in $\Omega$ and are uniformly bounded it follows that
\begin{equation}\label{convergenciagmadebilinferior1}
\int_{\Omega}|v_\ve|^p JT_\ve^{-1}\, dx\to \int_{\Omega}|v|^p\,dx,\text{ as }\ve\downarrow 0. 
\end{equation}

Taking into account the expressions of $\tilde{Q}_\ve(v_\ve)$ and $Q(v)$, it remains to show that
\begin{equation}\label{inferiorcritico}
\liminf_{\ve\to 0}\int_\Omega |\nabla v_\ve (DT_\ve\circ T_\ve^{-1})|^p JT_\ve^{-1}\, dx\geq \int_{\Omega}|\nabla v|^{p}\,dx.
\end{equation}

We first show that $\nabla v_\ve \to \nabla v$ a.e. in $\Omega$. To this end we need the fact that the sequence $\{v_\ve\}_{\ve>0}$ is a sequence of minimizers for $\tilde Q_\ve$ and therefore they verify the Euler-Lagrange equation associated to the functional $\tilde Q_\ve$. That is
\begin{align*}
\int_{\Omega}\big(|\nabla v_\ve (DT_\ve\circ T_\ve^{-1})|^{p-2}\nabla v_\ve (DT_\ve\circ T_\ve^{-1})\cdot\nabla \psi + &|v_\ve|^{p-2} v_\ve\psi\big)J T_\ve^{-1}\,dx\\
&= \lambda_\ve(\alpha) \int_{\partial\Omega}|v_\ve|^{p-2}v_\ve\psi J_\tau T_\ve^{-1}\,dS
\end{align*}
for every $\psi\in W_\Gamma^{1,p}(\Omega)$. 

Take now $K\subset \Omega$ a compact set, let $\delta=\frac12 d(K,\partial\Omega)$ and write $K_\delta =\{x\in \Omega\colon d(x,K)<\delta\}$. Therefore $K\subset K_\delta \subset\subset \Omega$ and if $\ve$ is small enough, we have that $T_\ve = id_{\R^n}$ on $K_\delta$.

Observe now that if $\psi_0\in W^{1,p}_\Gamma(\Omega)$ is such that $\supp(\psi_0)\subset K_\delta$, then
$$
\int_{\Omega}|\nabla v_\ve|^{p-2}\nabla v_\ve \nabla \psi_0+|v_\ve|^{p-2}v_\ve\psi_0\,dx=0.
$$

So consider $\eta\in C_c^\ito(\Omega)$ be such that $\eta=1$ in $K$, $\supp(\eta)\subset K_\delta$ and $0\leq \eta\leq 1$ in $K_{\delta}\setminus K$.

Therefore, for $\psi_\ve =\eta (v_\ve -v)$, we have
$$
\int_{\Omega}|\nabla v_\ve|^{p-2}\nabla v_\ve \nabla \psi_\ve+|v_\ve|^{p-2}v_\ve\psi_\ve\,dx=0,
$$
that is
\begin{equation}\label{debilconphiepsilon}
\int_{\Omega}|\nabla v_\ve|^{p-2}\nabla v_\ve (v_\ve-v)\nabla \eta+|\nabla v_\ve|^{p-2}\nabla v_\ve\nabla(v_\ve-v)\eta +|v_\ve|^{p-2}v_\ve\eta (v_\ve-v)dx =0.
\end{equation}
Since $v_\ve \rightharpoonup v$ weakly in $W^{1,p}(\Omega)$ we have that $\|\nabla v_\ve\|_{L^p (\Omega)}\leq C$, so
\begin{equation}\label{convgrad1}
\left |\int_{\Omega}|\nabla v_\ve|^{p-2}\nabla v_\ve (v_\ve-v)\nabla \eta\,dx\right |\leq \|\nabla\eta\|_\ito C \|v_\ve -v\|_{L^p(\Omega)}.
\end{equation}
On the other hand, since $\|v_\ve\|_{L^p (\Omega)}\leq C$,
\begin{equation}\label{convgrad2}
\left |\int_{\Omega}|v_\ve|^{p-2}v_\ve \eta (v_\ve -v)\,dx\right | \leq \|\eta\|_\ito C \|v_\ve -v\|_{L^{p}(\Omega)}.
\end{equation}

From \eqref{debilconphiepsilon}, \eqref{convgrad1} and \eqref{convgrad2} we obtain
\begin{equation}\label{convgrad3}
\lim_{\ve\to 0}\int_{K_\delta} |\nabla v_\ve|^{p-2}\nabla v_\ve\nabla(v_\ve-v)\eta\,dx= 0.
\end{equation}

Moreover, since $v_\ve\rightharpoonup v$ weakly in $W^{1,p}(\Omega)$ we get
\begin{equation}\label{convgrad4}
\lim_{\ve\to 0}\int_{K_\delta} |\nabla v|^{p-2}\nabla v\nabla(v_\ve-v)\eta\,dx = 0.
\end{equation}
Combining \eqref{convgrad3} and \eqref{convgrad4} we arrive at
$$
\lim_{\ve\to 0}\int_{K_\delta} (|\nabla v_\ve|^{p-2}\nabla v_\ve-|\nabla v|^{p-2}\nabla v) \nabla(v_\ve-v)\eta\,dx= 0
$$
But now, it is a well known fact (see e.g. \cite{Simon}) that the integrand is nonnegative and therefore $(|\nabla v_\ve|^{p-2}\nabla v_\ve-|\nabla v|^{p-2}\nabla v) \nabla(v_\ve-v)\to 0$ a.e. in $K$. From this, we can easily conclude that $\nabla v_\ve \to\nabla v$ a.e. in $K$. Since $K$ is arbitrary in $\Omega$ we conclude the pointwise convergence of the gradients a.e. in $\Omega$.

From the pointwise convergence of the gradient the conclusion of the liminf inequality follows easily. In fact, since $JT_\ve \to 1$ and $DT_\ve \to I$ a.e. in $\Omega$ we have
$$
|\nabla v_\ve (DT_\ve\circ T_\ve^{-1})|^p JT_\ve^{-1}\to |\nabla v|^p\text{ a.e. in }\Omega. 
$$ 
This last fact, together with Fatou's Lemma imply \eqref{inferiorcritico}.

\noindent {\em lim sup inequality:} Given $v\in X^*_\alpha$, let $v_k \in \tilde{X}_\alpha^{\ve_k}$ be such that $v_k\to v$ strongly in $W^{1,p}(\Omega)$. Observe that such a sequence exists by Lemma \ref{lema.mosco}.

Now this and our hypotheses on $T_\ve$ easily imply that
$$
\lim_{k\to\infty} \tilde{Q}_{\ve_k}(v_k)= Q(v).
$$
The proof is completed.
\end{proof}

Now, the proof of Theorem \ref{superycriticogeneral} follows as a simple corollary.

\begin{proof}[Proof of Theorem \ref{superycriticogeneral}]
The proof is now a trivial consequence of Proposition \ref{g-convergencia} and Theorem \ref{convergencia.minimos}.
\end{proof}

\section{Convergence of optimal windows}\label{window.sec}

In this section we analyze the behavior of a sequence of optimal windows $\{\Gamma_\ve\}_{\ve>0}$. Recall that an optimal windows is a set $\Gamma_\ve\subset \Omega_\ve$ such that $|\Gamma_\ve|_{n-1} = \alpha |\partial\Omega_\ve|_{n-1}$ and $\lambda_\ve(\Gamma_\ve) = \lambda_\ve(\alpha)$. 

We will see that, as a consequence of the convergence of the constants $\lambda_\ve(\alpha)\to \lambda^*(\alpha)$ we will deduce the convergence of these optimal windows to an optimal window of the limit problem in a suitable sense.

\begin{thm}\label{teo.ventanas}
Under the same assumptions and notations of Theorem \ref{superycriticogeneral}, if $\Gamma_\ve\subset \Omega_\ve$ is an optimal window associated to $\lambda_\ve(\alpha)$ then, up to a subsequence, it converges, as $\ve\downarrow 0$, to an optimal window of the limit problem $\lambda^*(\alpha)$ in the following sense: Let us define the Radon measures $\{\nu_\ve\}_{\ve>0}$ as
$$
d\nu_\ve = \chi_{_{\Gamma_\ve}}dS.
$$
Then, the family is pre compact in the weak topology of measures and every accumulation point of $\{\nu_\ve\}_{\ve>0}$ is of the form
$$
d\nu^* = \chi_{_{\Gamma^*}}m dS,
$$
where $\Gamma^*$ is an optimal windows for the problem \eqref{lambdaestrella}.
\end{thm}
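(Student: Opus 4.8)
The plan is to extract the optimal window $\Gamma^*$ directly from the limiting eigenfunction produced by Theorem \ref{superycriticogeneral}, and then to identify the weak-* limit of the measures $\nu_\ve$ in terms of that window. I would start from the eigenfunctions $u_\ve$ associated to $\lambda_\ve(\alpha)$, normalized by $\|u_\ve\|_{L^p(\partial\Omega_\ve)}=1$, and set $v_\ve := u_\ve\circ T_\ve^{-1}\in W^{1,p}(\Omega)$. By Theorem \ref{superycriticogeneral}, the family $\{v_\ve\}$ is weakly precompact in $W^{1,p}(\Omega)$ and every accumulation point $u^*$ is an eigenfunction of $\lambda^*(\alpha)$; passing to a subsequence, $v_\ve\rightharpoonup u^*$ weakly in $W^{1,p}(\Omega)$, hence (by compactness of the trace) $v_\ve\to u^*$ strongly in $L^p(\partial\Omega)$ and $dS$-a.e.\ on $\partial\Omega$. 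I would then define the limit window as $\Gamma^* := \{u^*=0\}\cap\partial\Omega$; by the characterization of optimal windows recalled after \eqref{Omega.epsilon} and the fact that $u^*$ attains $\lambda^*(\alpha)$, this $\Gamma^*$ is an optimal window for \eqref{lambdaestrella}, with $\mu^*(\Gamma^*)\ge \alpha\mu^*(\partial\Omega)$.

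Next I would handle the measures. The family $\{\nu_\ve\}$ has uniformly bounded mass, since $\nu_\ve(\partial\Omega) = |\Gamma_\ve|_{n-1} = \alpha|\partial\Omega_\ve|_{n-1}$ stays bounded as $\ve\to 0$; hence by weak-* compactness of bounded Radon measures we may extract a subsequence with $\nu_\ve\rightharpoonup\nu^*$. The core task is to show $d\nu^* = \chi_{\Gamma^*}\, m\, dS$. The natural device is to pull the integral back to $\partial\Omega_\ve$ using the tangential change of variables: for $g\in C(\partial\Omega)$,
$$
\int_{\partial\Omega} g\,\chi_{_{\Gamma_\ve^0}}\, J_\tau T_\ve^{-1}\, dS = \int_{\partial\Omega_\ve} (g\circ T_\ve)\,\chi_{_{\Gamma_\ve}}\, dS = \int_{\partial\Omega} g\, d\nu_\ve',
$$
where $\Gamma_\ve^0 = T_\ve(\Gamma_\ve)\subset\partial\Omega$ is the pulled-back window and $\nu_\ve'$ denotes the transported measure. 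Thus the problem reduces to identifying the weak-* limit of $\chi_{_{\Gamma_\ve^0}} J_\tau T_\ve^{-1}\, dS$. Since $J_\tau T_\ve^{-1}\stackrel{*}{\rightharpoonup} m$ by \eqref{propiedades.perturbaciones}, I expect the limit to be $\chi_{_{\Gamma^*}}\, m\, dS$, provided I can show that the characteristic functions $\chi_{_{\Gamma_\ve^0}}$ converge in the appropriate sense to $\chi_{_{\Gamma^*}}$ and that this convergence is compatible with the weak-* convergence of $J_\tau T_\ve^{-1}$.

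The main obstacle is precisely this last compatibility, since one cannot in general multiply two merely weak-* convergent sequences and pass to the limit. The way I would resolve it is by exploiting the structure $\Gamma_\ve^0 = \{v_\ve=0\}\cap\partial\Omega$ together with the strong trace convergence $v_\ve\to u^*$ in $L^p(\partial\Omega)$: strong convergence of the functions upgrades the convergence of the zero-sets, allowing a one-sided control in the spirit of Lemma \ref{lema.medidas1}. Concretely, I would prove that for any accumulation point $\nu^*$ and any nonnegative $g\in C(\partial\Omega)$ one has both $\int g\, d\nu^* \le \int_{\Gamma^*} g\, m\, dS$ and the reverse inequality $\int g\, d\nu^* \ge \int_{\Gamma^*} g\, m\, dS$; the first via the $\limsup$-type estimate of Lemma \ref{lema.medidas1} applied to the measures $d\mu_\ve = J_\tau T_\ve^{-1}\, dS$ and the functions $v_\ve$, and the second by a measure-saturation argument using that $\nu_\ve(\partial\Omega)=\alpha|\partial\Omega_\ve|_{n-1}\to \alpha\mu^*(\partial\Omega) = \mu^*(\Gamma^*)$, which forces no mass to be lost and pins the limit exactly on $\Gamma^*$ with density $m$. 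Combining the two inequalities yields $d\nu^* = \chi_{_{\Gamma^*}}\, m\, dS$, completing the proof.
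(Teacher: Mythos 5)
Your proposal is correct in substance, and the setup (pass to $v_\ve=u_\ve\circ T_\ve^{-1}$, extract a weak $W^{1,p}$ limit $u^*$ via Theorem \ref{superycriticogeneral}, get a.e.\ trace convergence, set $\Gamma^*=\{u^*=0\}\cap\partial\Omega$, pull $\nu_\ve$ back to $\partial\Omega$ through the tangential Jacobian) coincides with the paper's. Where you genuinely diverge is in the identification of the limit measure. The paper first upgrades the one-sided estimate to the two-sided set convergence $\mu_\ve\bigl([\{v_\ve=0\}\triangle\{v=0\}]\cap\partial\Omega\bigr)\to 0$, via an Egoroff-based argument (Lemmas \ref{convergenciamedidascritico} and \ref{convergenciadifsimetricacritico}), and then concludes with an explicit three-term decomposition $A_\ve+B_\ve+C_\ve$. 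You instead combine the domination $\nu^*\le\chi_{_{\Gamma^*}}m\,dS$ (obtained from Lemma \ref{lema.medidas1} applied to the \emph{weighted} measures $g\,J_\tau T_\ve^{-1}\,dS$ for nonnegative $g$ --- note the lemma as stated only covers $g\equiv 1$, and it must be applied to $|v_\ve|$ since the lemma requires nonnegative functions) with conservation of total mass, $\nu^*(\partial\Omega)=\lim\alpha|\partial\Omega_\ve|_{n-1}=\alpha\mu^*(\partial\Omega)=\mu^*(\Gamma^*)$, to force equality. This saturation argument is valid (everything lives in a fixed compact set, so testing against $1$ gives convergence of total masses) and is arguably leaner, since it bypasses Lemmas \ref{convergenciamedidascritico} and \ref{convergenciadifsimetricacritico} entirely; what it gives up is the stronger conclusion that the windows themselves converge in the symmetric-difference sense, which the paper's route establishes as a byproduct. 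Be aware that your final step hinges on the exact equality $\mu^*(\Gamma^*)=\alpha\mu^*(\partial\Omega)$ (not merely $\ge$), i.e.\ that the zero set of the limit extremal is not oversized; the paper relies on the same fact (the weighted analogue of Theorem 3.6 of \cite{Bonder-Neves-Delpezzo}), so you should state it explicitly rather than leave the inequality $\mu^*(\Gamma^*)\ge\alpha\mu^*(\partial\Omega)$ from your first paragraph.
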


In order to show the convergence of optimal windows we need a couple of lemmas.

\begin{lemp}\label{convergenciamedidascritico}
Let $(X, \Sigma,\nu)$ be a measure space of finite measure and let $\{f_n\}_{n\in\N},f$ be $\nu-$measurable nonnegative functions such that $f_n\to f$ $\nu-$a.e. 

Let $\{\mu_n\}_{n\in\N}$ and $\mu$ be nonnegative measures, absolutely continuous with respect to $\nu$ such that $\mu_n(A)\to \mu(A)$, for every $A\in \Sigma$.

Then, if $\lim_{n\to\ito}\mu_n(\{f_n=0\}) = \mu(\{f=0\})$, given $\ve>0$ there exists $j_0\in\N$ such that, for every $j\ge j_0$,
$$
\limsup_{n\to\ito} \mu_n(\{0<f_n\le\frac{1}{j}\}) < \ve.
$$
\end{lemp}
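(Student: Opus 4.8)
The statement I want to prove (Lemma \ref{convergenciamedidascritico}) is a refinement of Lemma \ref{lema.medidas1}. Recall that Lemma \ref{lema.medidas1} already gives the one-sided bound $\limsup_{n\to\infty}\mu_n(\{f_n=0\})\le\mu(\{f=0\})$. Here the hypothesis is the stronger equality $\lim_{n\to\infty}\mu_n(\{f_n=0\})=\mu(\{f=0\})$, and from this I want to conclude that, for $j$ large, the ``thin layers'' $\{0<f_n\le\frac1j\}$ carry asymptotically negligible $\mu_n$-mass. The plan is to exploit the equality hypothesis to upgrade the proof of Lemma \ref{lema.medidas1}: equality means there is no loss of mass when passing from the zero set of $f$ to the zero sets of the $f_n$, so the mass that would hide in a thin sublevel layer must vanish.

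\textbf{Main steps.}
First I would fix $\eta>0$ (playing the role of the target $\varepsilon$) and use the monotone convergence $\mu(\{f\le\frac1j\})\downarrow\mu(\{f=0\})$ as $j\to\infty$ (valid since $\{f=0\}=\bigcap_j\{f\le\frac1j\}$ and $\mu$ is finite) to pick $j_0$ so that $\mu(\{f\le\frac1j\})\le\mu(\{f=0\})+\tfrac\eta2$ for all $j\ge j_0$. Next, for a fixed such $j$, I would split the mass on the layer as a difference: write $\mu_n(\{0<f_n\le\frac1j\})=\mu_n(\{f_n\le\frac1j\})-\mu_n(\{f_n=0\})$. The second term converges to $\mu(\{f=0\})$ by hypothesis, so it suffices to control $\limsup_n\mu_n(\{f_n\le\frac1j\})$ from above by $\mu(\{f=0\})+\eta$. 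To do this I would adapt the liminf/limsup argument of Lemma \ref{lema.medidas1}: using $f_n\to f$ $\nu$-a.e., for the set $\{f\le\frac1j\}$ one controls the eventual sublevel sets $\{f_n\le\frac1j\}$ by sets measurable in the limit. Concretely, $\limsup_n\{f_n\le\frac1j\}\subset\{f\le\frac1j\}$ up to a $\nu$-null set (if $f(x)>\frac1j$ then eventually $f_n(x)>\frac1j$), and since each $\mu_n\ll\nu$ with $\mu_n\to\mu$ setwise, one can pass to the limit on the ``tail unions'' $\bigcup_{k\ge n_0}\{f_k\le\frac1j\}$ exactly as in the proof of Lemma \ref{lema.medidas1} to obtain $\limsup_n\mu_n(\{f_n\le\frac1j\})\le\mu(\{f\le\frac1j\})$.

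\textbf{Concluding.}
Combining the pieces: for $j\ge j_0$,
$$
\limsup_{n\to\infty}\mu_n\big(\{0<f_n\le\tfrac1j\}\big)
\le\mu\big(\{f\le\tfrac1j\}\big)-\lim_{n\to\infty}\mu_n(\{f_n=0\})
\le\big(\mu(\{f=0\})+\tfrac\eta2\big)-\mu(\{f=0\})=\tfrac\eta2<\eta,
$$
which is the claim with $\varepsilon=\eta$. The subtraction of the limits is justified because the subtracted term $\mu_n(\{f_n=0\})$ genuinely converges (this is where the equality hypothesis, as opposed to a mere inequality, is essential); without it one could only bound the layer mass by the unhelpful quantity $\mu(\{f\le\frac1j\})$ which does not shrink below $\mu(\{f=0\})$.

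\textbf{Main obstacle.}
I expect the delicate point to be the setwise-limit step $\limsup_n\mu_n(\{f_n\le\frac1j\})\le\mu(\{f\le\frac1j\})$, since the measures $\mu_n$ vary with $n$ and are only assumed to converge setwise (equivalently, their $\nu$-densities converge weakly in $L^1$, as noted in the remark following Lemma \ref{lema.medidas1}). One cannot simply interchange limits; instead one must run the countable intersection/union manipulation of Lemma \ref{lema.medidas1}, reducing a statement about the moving sets $\{f_n\le\frac1j\}$ to one about a fixed set on which setwise convergence $\mu_n\to\mu$ can be applied. The boundary between strict and non-strict inequalities ($\{f_n\le\frac1j\}$ versus $\{f_n<\frac1j\}$) must be handled carefully, but since only a $\limsup$ bound is sought and $\nu$-null modifications are harmless, these technicalities do not affect the conclusion.
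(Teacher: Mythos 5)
Your proposal is correct and follows essentially the same route as the paper: choose $j_0$ by continuity from above of $\mu$ on the sets $\{f\le\frac1j\}$, write $\mu_n(\{0<f_n\le\frac1j\})=\mu_n(\{f_n\le\frac1j\})-\mu_n(\{f_n=0\})$, bound $\limsup_n\mu_n(\{f_n\le\frac1j\})\le\mu(\{f\le\frac1j\})$ via the tail-union argument transferred through the setwise convergence $\mu_n(A)\to\mu(A)$, and subtract the convergent term using the equality hypothesis. The delicate point you flag (handling the varying measures by reducing to a fixed tail-union set) is exactly how the paper's proof proceeds.
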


\begin{proof}
Let $\ve>0$. Since $\chi_{_{\{0<f\le\frac{1}{j}\}}}\to 0$ $\nu-$a.e. when $j\to\ito$, we have that there exists $j_0\in\N$ such that
\begin{equation}\label{menorqueep}
\mu(\{0<f\le\frac{1}{j}\})<\ve, \quad\text{for every } j\ge j_0.
\end{equation}

Since $f_n\to f$ $\nu-$a.e., we have that
$$
\{f\le \frac{1}{j}\}\supset \bigcap_{n_0\in\N}\bigcup_{k\ge n_0}\{f_k\le \frac{1}{j}\},
$$
from where
$$
\mu(\{f\le \frac{1}{j}\})\ge \lim_{n_0\to\ito}\mu\left(\bigcup_{k\ge n_0}\{f_k\le \frac{1}{j}\}\right).
$$
Hence, given $\delta>0$, there exists $n_0\in\N$ such that
\begin{equation}\label{mudelta}
\mu(\{f\le \frac{1}{j}\})+\delta \ge \mu\left(\bigcup_{k\ge n_0}\{f_k\le \frac{1}{j}\}\right)
\end{equation}

By our hypothesis on the convergence of the measures,
$$
\lim_{n\to\ito}\mu_n\left(\bigcup_{k\ge n_0}\{f_k\le \frac{1}{j}\}\right) = \mu\left(\bigcup_{k\ge n_0}\{f_k\le \frac{1}{j}\}\right),
$$
from where
\begin{equation}\label{mudelta2}
\mu\left(\bigcup_{k\ge n_0}\{f_k\le \frac{1}{j}\}\right)+\delta\ge \mu_n\left(\bigcup_{k\ge n_0}\{f_k\le \frac{1}{j}\}\right)\ge \mu_n(\{f_n\le \frac{1}{j}\}),
\end{equation}
for any $n$ large enough.

Using \eqref{mudelta} and \eqref{mudelta2} we obtain
$$
\mu(\{f\le \frac{1}{j}\}) + 2\delta \ge \limsup_{n\to\ito}\mu_n(\{f_n\le \frac{1}{j}\}) 
$$
and since $\delta>0$ is arbitrary, it follows that
\begin{equation}\label{casiestoy}
 \limsup_{n\to\ito}\mu_n(\{f_n\le \frac{1}{j}\}) \le\mu(\{f\le \frac{1}{j}\}) 
\end{equation}

Now, the lemma follows from \eqref{menorqueep} and \eqref{casiestoy} by using the hypothesis
$$
\lim_{n\to\ito}\mu_n(\{f_n=0\}) = \mu(\{f=0\}).
$$
The proof is completed.
\end{proof}

\begin{lemp}\label{convergenciadifsimetricacritico}
Let $(X, \Sigma,\nu)$ be a measure space of finite measure and let $\{f_n\}_{n\in\N},f$ be $\nu-$measurable nonnegative functions such that $f_n\to f$ $\nu-$a.e. 

Let $\{\mu_n\}_{n\in\N}$ and $\mu$ be nonnegative measures, absolutely continuous with respect to $\nu$ such that $\mu_n(A)\to \mu(A)$, for every $A\in \Sigma$.

Then, if $\lim_{n\to\ito}\mu_n(\{f_n=0\}) = \mu(\{f=0\})$, it follows that
$$
\lim_{n\to\ito}\mu_n(\{f_n =0\}\Delta \{f=0\})=0.
$$
\end{lemp}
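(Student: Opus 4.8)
The plan is to split the symmetric difference into its two one-sided pieces and show that each has vanishing $\mu_n$-measure. Writing $A_n=\{f_n=0\}$ and $A=\{f=0\}$, set
\[
I_n:=A_n\setminus A=\{f_n=0,\ f>0\},\qquad II_n:=A\setminus A_n=\{f=0,\ f_n>0\},
\]
so that $\mu_n(A_n\Delta A)=\mu_n(I_n)+\mu_n(II_n)$. A useful first reduction is the elementary identity $\mu_n(II_n)-\mu_n(I_n)=\mu_n(A)-\mu_n(A_n)$. Since $A=\{f=0\}$ is a fixed set, the setwise convergence $\mu_n\to\mu$ gives $\mu_n(A)\to\mu(A)$, while the standing hypothesis of the lemma gives $\mu_n(A_n)\to\mu(A)$; hence $\mu_n(II_n)-\mu_n(I_n)\to 0$, and it suffices to prove that just one of the two pieces vanishes.

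I would handle $\mu_n(II_n)$ with the preceding Lemma \ref{convergenciamedidascritico}. Fix $\ve>0$ and split at a threshold $1/j$,
\[
II_n=\{f=0,\ f_n>\tfrac1j\}\ \cup\ \{f=0,\ 0<f_n\le\tfrac1j\}.
\]
The second piece is contained in $\{0<f_n\le\frac1j\}$, and Lemma \ref{convergenciamedidascritico}, whose hypotheses (including $\mu_n(\{f_n=0\})\to\mu(\{f=0\})$) are exactly the present ones, provides $j_0$ such that $\limsup_n\mu_n(\{0<f_n\le\frac1j\})<\ve$ for every $j\ge j_0$. For the first piece I observe that on $\{f=0\}$ one has $f_n\to 0$ $\nu$-a.e., so the indicator of $\{f=0,\ f_n>\frac1j\}$ tends to $0$ $\nu$-a.e.

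The passage from this $\nu$-a.e.\ convergence to convergence of the \emph{varying} measures is carried out by a fixed--superset device. For fixed $j$ put $E_n=\{f=0,\ f_n>\frac1j\}$ and $C_{n_0}=\bigcup_{k\ge n_0}E_k$; for $n\ge n_0$ we have $E_n\subset C_{n_0}$, so $\mu_n(E_n)\le\mu_n(C_{n_0})\to\mu(C_{n_0})$ by setwise convergence on the fixed set $C_{n_0}$. As $n_0\to\infty$ the sets $C_{n_0}$ decrease to the set where $f=0$ and $f_k>\frac1j$ for infinitely many $k$, which is $\nu$-null since $f_n\to 0$ on $\{f=0\}$; because $\mu\ll\nu$ and $\mu$ is finite, continuity from above yields $\mu(C_{n_0})\to 0$. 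Thus $\limsup_n\mu_n(E_n)=0$ for each fixed $j$, and combining the two pieces gives $\limsup_n\mu_n(II_n)\le\ve$. Letting $\ve\to 0$ shows $\mu_n(II_n)\to 0$, whence the identity of the first paragraph gives $\mu_n(I_n)\to 0$ as well, and therefore $\mu_n(A_n\Delta A)\to 0$.

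The main obstacle is that three objects move simultaneously---the level sets $\{f_n=0\}$, the functions $f_n$, and the measures $\mu_n$---so pointwise convergence of indicators cannot be transferred to $\mu_n$ directly. Away from $\{f=0\}$ the fixed--superset device resolves this, but the thin region where $f_n$ is positive yet arbitrarily small on $\{f=0\}$ genuinely escapes it, since there the relevant indicators need not converge. This is precisely where Lemma \ref{convergenciamedidascritico} is indispensable, and where the hypothesis $\mu_n(\{f_n=0\})\to\mu(\{f=0\})$ does its work: it forbids a loss of mass across the zero set, without which $\mu_n(II_n)$ would in general remain bounded away from $0$.
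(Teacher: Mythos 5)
Your proof is correct, and it takes a genuinely different route from the paper's. The paper attacks the two halves of the symmetric difference separately: it invokes Egoroff's theorem to obtain uniform convergence $f_n\rightrightarrows f$ off a set $C_\delta$ with $\mu(C_\delta)<\delta$, uses the resulting inclusions $\{f_n=0\}\cap E_\delta\subset\{f\le\delta\}$ and $\{f=0\}\cap E_j\subset\{f_{n_j}<\tfrac1j\}$ to bound each half, and applies Lemma \ref{convergenciamedidascritico} to the piece $\{f_n=0\}\setminus\{f=0\}$. You instead (i) exploit the identity $\mu_n(A\setminus A_n)-\mu_n(A_n\setminus A)=\mu_n(A)-\mu_n(A_n)\to 0$, which reduces the whole lemma to a single one-sided difference, and (ii) dispose of $\{f=0\}\setminus\{f_n=0\}$ without Egoroff, combining Lemma \ref{convergenciamedidascritico} (for the thin region $\{0<f_n\le\tfrac1j\}$) with the fixed tail sets $C_{n_0}=\bigcup_{k\ge n_0}\{f=0,\ f_k>\tfrac1j\}$, which decrease to a $\nu$-null set so that setwise convergence plus continuity from above finish the job. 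The tail-set device is in the same spirit as the arguments the paper already uses in Lemmas \ref{lema.medidas1} and \ref{convergenciamedidascritico}, so your proof is stylistically more uniform; the reduction (i) halves the work and makes transparent that the hypothesis $\mu_n(\{f_n=0\})\to\mu(\{f=0\})$ is exactly what forbids mass from leaking across the zero set. One small shared caveat: both your subtraction identity and your appeal to continuity from above (and likewise the paper's subtractions of $\mu_n(\{f_n=0\})$) implicitly use that $\mu$ and the $\mu_n$ are finite; this is harmless here since in the application the densities are bounded on a set of finite $\nu$-measure, but it is worth stating.
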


\begin{proof}
By Egoroff's Theorem, we have that, given $\delta >0$, there exists a measurable set $C_\delta \subset X$ such that
$$
f_n \rightrightarrows f,\text{ uniformly when } n \to \ito \text{ in } X \setminus C_\delta  
$$
with
$$
\mu(C_\delta) <\delta. 
$$
Observe that, as $\mu_n(A)\to \mu(A)$ for every $A$ measurable, we can assume that 
$$
\mu_n(C_\delta)<\delta
$$
for every $n$ large enough.
 
Define now the set $E_\delta = X\setminus C_\delta$ and using this uniform convergence on the set $E_\delta$, we have
$$
\{f_n =0\}\cap E_\delta \subset \{f\leq \delta\} \cap E_\delta,
$$
for $\ve$ small enough.

We then have that
$$
\{f=0\}\setminus \{f_n =0\}\subset \left ((\{f\leq \delta \}\setminus \{f_n =0\})\cap E_\delta \right )\cup C_\delta 
$$
from where
$$
\mu_n(\{f=0\}\setminus \{f_n =0\}) \le \mu_n(\{f\le\delta\}) - \mu_n(\{f_n=0\}) + \delta.
$$
Taking the limit as $n\to \ito$, we obtain
$$
\limsup_{n\to \ito}\mu_n(\{f=0\}\setminus \{f_n =0\})\le \mu(\{f\le\delta\}) - \mu(\{f=0\}) + \delta,
$$
and now making $\delta\to 0$ we can conclude
$$
\lim_{n\to \ito} \mu_n(\{f=0\}\setminus \{f_n =0\})=0.
$$

On the other hand, given $j\in\N$, there exists $n_j\in\N$ such that
$$
\{f=0\}\cap E_{j} \subset \{f_{n_j}<\frac{1}{j}\}\cap E_{j},
$$
where $\nu(X\setminus E_j)\le \frac{1}{j}$.

Now, reasoning as in the previous case,
$$
\limsup_{j\to \ito}\mu_{n_j}(\{f_{n_j}=0\}\setminus\{f=0\}) \le \limsup_{j\to \ito} \left( \mu_{n_j}(\{f_{n_j}<\frac{1}{j}\})\right) - \mu(\{f=0\}).
$$
But, from Lemma \ref{convergenciamedidascritico}, it follows that
$$
\mu_{n_j}(\{f_{n_j}<\frac{1}{j}\}) = \mu_{n_j}(\{f_{n_j}=0\}) + \mu_{n_j}(\{0<f_{n_j}<\frac{1}{j}\})\to \mu(\{f=0\}).
$$
This completes the proof.
\end{proof}

\begin{rk}
When the sequence of measures $\mu_n$ is constant, this Lemma was proved in \cite[Lemma 3.1]{Bonder-Groisman-Rossi}.
\end{rk}

With the help of Lemma \ref{convergenciadifsimetricacritico} we can now prove Theorem \ref{teo.ventanas}

\begin{proof}[Proof of Theorem \ref{teo.ventanas}]
Let $u_\ve\in W^{1,p}(\Omega_\ve)$ be an extremal for $\lambda_\ve(\alpha)$. We can assume that $u_\ve\in X^\ve_\alpha$. Then, by \cite[Theorem 3.6]{Bonder-Neves-Delpezzo}, we have that $\{u_\ve=0\}\cap \Omega_\ve = \Gamma_\ve$ is an optimal window for $\lambda_\ve(\alpha)$ and hence it verifies $|\Gamma_\ve|_{n-1}=\alpha |\Omega_\ve|_{n-1}$.

Consider now the rescaled functions $v_\ve := u_\ve\circ T_\ve^{-1}$. Then $v_\ve$ is an extremal of $\tilde Q_\ve$ in the set $\tilde X_\alpha^\ve$.

By Theorem \ref{superycriticogeneral}, we can assume that there exists $v\in W^{1,p}(\Omega)$ such that $v_\ve\rightharpoonup v$ weakly in $W^{1,p}(\Omega)$, $v\in X^*_\alpha$ and $v$ is an extremal for  $\lambda^*(\alpha)$. In particular
$$
\mu^*(\{v=0\}) = \alpha \mu^*(\partial\Omega).
$$

On the other hand,
$$
|\{u_\ve=0\}\cap \partial\Omega_\ve|_{n-1} = \int_{\partial\Omega_\ve} \chi_{_{\{u_\ve=0\}}}\, dS = \int_{\partial\Omega} \chi_{_{\{v_\ve=0\}}} J_\tau T_\ve^{-1}\, dS.
$$
So, if we denote by $\mu_\ve$ to the measure $d\mu_\ve = J_\tau T_\ve^{-1}\, dS$ on $\partial\Omega$, we have that
$$
\mu_\ve(\{v_\ve=0\}\cap \partial\Omega) = \alpha |\partial\Omega_\ve|_{n-1} = \alpha \mu_\ve(\partial\Omega),
$$
and since $J_\tau T_\ve^{-1} \stackrel{*}{\rightharpoonup} m$ weakly-* in $L^\ito(\partial\Omega)$, it holds that
$$
\mu_\ve(A)\to \mu^*(A),
$$
for every $A\subset \partial\Omega$ measurable. In particular, $\mu_\ve(\partial\Omega)\to \mu^*(\partial\Omega)$.

All of this discussion leads us to conclude that
$$
\mu_\ve(\{v_\ve=0\}\cap \partial\Omega)\to \mu^*(\{v=0\}\cap \partial\Omega).
$$
Now we are in a position of applying Lemma \ref{convergenciadifsimetricacritico} and conclude that
$$
\mu_\ve([\{v_\ve=0\}\triangle \{v=0\}]\cap \partial\Omega)\to 0.
$$

Now, let $\Gamma_\ve$ be an optimal window and let $u_\ve\in X^\ve_\alpha$ an associated extremal. Let $v_\ve=u_\ve\circ T_\ve^{-1}$ the rescaled extremal as was previously described. Again, we can assume that $v_\ve\to v$ a.e. in $\partial\Omega$ where $v\in X^*_\alpha$ is an extremal associated to $\lambda^*(\alpha)$.

Let $f\in C_b (\R^n)$, then
\begin{align*}
\int fd\nu_\ve - \int fd\nu^* =&\int_{\partial \Omega_\ve} f \chi_{_{\{u_\ve =0\}}}\, dS-\int_{\partial\Omega} f\chi_{_{\{v=0\}}} m\, dS\\
=&\int_{\partial \Omega}(f\circ T_\ve^{-1}) \chi_{_{\{v_\ve=0\}}} \, d\mu_\ve - \int_{\partial\Omega} f\chi_{_{\{v=0\}}} \, d\mu^*\\
=&\int_{\partial\Omega}(\chi_{_{\{v_\ve =0\}}}-\chi_{_{\{v=0\}}})(f\circ T_\ve^{-1}) \,d\mu_\ve\\
& + \int_{\partial \Omega}\chi_{_{\{v=0\}}}[(f\circ T_\ve^{-1})-f]\, d\mu_\ve \\
&+ \int_{\partial\Omega} \chi_{_{\{v=0\}}} f (d\mu_\ve - d\mu^*)\\
=& A_\ve + B_\ve + C_\ve.
\end{align*}
Each of these terms can be easily shown to converge to zero. In fact
$$
|A_\ve|\le \|f\|_\infty \mu_\ve([\{v_\ve=0\}\triangle\{v=0\}]\cap \partial\Omega)\to 0,
$$
by Lemma \ref{convergenciadifsimetricacritico}. On the other hand, 
$$
|B_\ve|\le \|(f\circ T^{-1}_\ve) - f\|_{L^\ito(\partial\Omega)} \mu_\ve(\partial\Omega)\to 0,
$$
since $\mu_\ve(\partial\Omega)$ is convergent (hence bounded) and $f\circ T^{-1}_\ve\rightrightarrows f$ on compact sets.

Finally, using that $\mu_\ve \rightharpoonup \mu^*$ weakly in the sense of measures it follow that
$$
|C_\ve|\to 0.
$$
This completes the proof of the theorem.
\end{proof}

\section*{Acknowledgements}

This paper was partially supported by Universidad de Buenos Aires under grant UBACyT 20020130100283BA, by CONICET under grant PIP 2009 845/10 and by ANPCyT under grant PICT 2012-0153. J. Fern\'andez Bonder is a member of CONICET and J.F. Spedaletti is a doctoral fellow of CONICET.

\bibliographystyle{amsplain}
\bibliography{biblio}

\def\cprime{$'$}
\providecommand{\bysame}{\leavevmode\hbox to3em{\hrulefill}\thinspace}
\providecommand{\MR}{\relax\ifhmode\unskip\space\fi MR }
\providecommand{\MRhref}[2]{%
  \href{http://www.ams.org/mathscinet-getitem?mr=#1}{#2}
}
\providecommand{\href}[2]{#2}
\begin{thebibliography}{1}

\bibitem{Lions}
A.~Bensoussan, J.-L. Lions, and G.~Papanicolaou, \emph{Asymptotic analysis for
  periodic structures}, AMS Chelsea Publishing, Providence, RI, 2011, Corrected
  reprint of the 1978 original [MR0503330]. \MR{2839402}

\bibitem{Braides}
Andrea Braides, \emph{{$\Gamma$}-convergence for beginners}, Oxford Lecture
  Series in Mathematics and its Applications, vol.~22, Oxford University Press,
  Oxford, 2002. \MR{1968440 (2004e:49001)}

\bibitem{Murat}
Doina Cioranescu and Fran{\c{c}}ois Murat, \emph{A strange term coming from
  nowhere [ {MR}0652509 (84e:35039a); {MR}0670272 (84e:35039b)]}, Topics in the
  mathematical modelling of composite materials, Progr. Nonlinear Differential
  Equations Appl., vol.~31, Birkh\"auser Boston, Boston, MA, 1997, pp.~45--93.
  \MR{1493040}

\bibitem{DalMaso}
Gianni Dal~Maso, \emph{An introduction to {$\Gamma$}-convergence}, Progress in
  Nonlinear Differential Equations and their Applications, 8, Birkh\"auser
  Boston, Inc., Boston, MA, 1993. \MR{1201152 (94a:49001)}

\bibitem{Bonder-Neves-Delpezzo}
Leandro Del~Pezzo, Juli{{\'a}}n Fern{{\'a}}ndez~Bonder, and Wladimir Neves,
  \emph{Optimal boundary holes for the {S}obolev trace constant}, J.
  Differential Equations \textbf{251} (2011), no.~8, 2327--2351. \MR{2823670
  (2012g:49102)}

\bibitem{Bonder-Groisman-Rossi}
Juli{\'a}n Fern{\'a}ndez~Bonder, Pablo Groisman, and Julio~D. Rossi,
  \emph{Optimization of the first {S}teklov eigenvalue in domains with holes: a
  shape derivative approach}, Ann. Mat. Pura Appl. (4) \textbf{186} (2007),
  no.~2, 341--358. \MR{2295124 (2007m:35179)}

\bibitem{Bonder-Orive-Rossi}
Juli{\'a}n Fern{\'a}ndez~Bonder, Rafael Orive, and Julio~D. Rossi, \emph{The
  best {S}obolev trace constant in a domain with oscillating boundary},
  Nonlinear Anal. \textbf{67} (2007), no.~4, 1173--1180. \MR{2325371
  (2008g:35010)}

\bibitem{Henrot}
Antoine Henrot and Michel Pierre, \emph{Variation et optimisation de formes},
  Math\'ematiques \& Applications (Berlin) [Mathematics \& Applications],
  vol.~48, Springer, Berlin, 2005, Une analyse g{\'e}om{\'e}trique. [A
  geometric analysis]. \MR{2512810 (2009m:49003)}

\bibitem{Simon}
Jacques Simon, \emph{R\'egularit\'e de la solution d'une \'equation non
  lin\'eaire dans {${\bf R}^{N}$}}, Journ\'ees d'{A}nalyse {N}on {L}in\'eaire
  ({P}roc. {C}onf., {B}esan\c con, 1977), Lecture Notes in Math., vol. 665,
  Springer, Berlin, 1978, pp.~205--227. \MR{519432 (80b:35035)}

\end{thebibliography}
\end{document}